\def \ben{\begin{eqnarray}}
\def \een{\end{eqnarray}}
\def \be{\begin{eqnarray*}}
\def \ee{\end{eqnarray*}}
\def \beq{\begin{equation}}
\def \eq{\end{equation}}
\def \eref#1{(\ref{#1})}
\def \S{{\sf S}}
\def \RS{{\sf RS}}
\def \In{{\sf in}}
\def \comp{{\sf comp}}
\def \Stable{{\sf Stable}}
\def \s{{\sf Sto}}
\def \D{{\sf{Det}}}
\def \1{\mathbbm{1}}
\def \bT{\overline{T}}
\newcommand{\Z}{\mathbb{Z}_+}
\newcommand{\V}{\mathcal{V}(G)}
\newtheorem{thm}{Theorem}[section]
\newtheorem{pro}[thm]{Proposition}
\newtheorem{cor}[thm]{Corollary}
\newtheorem{lem}[thm]{Lemma}
\newtheorem{de}[thm]{Definition}%[section]
\def \captionn#1{\begin{center}\begin{minipage}{14cm}\sf\caption{\small #1}\end{minipage}\end{center}}
\renewcommand{\baselinestretch}{1.2}
\title{A natural stochastic extension of the sandpile model on a graph\thanks{This work has been partially supported by ANR-08-BLAN-0190-04 A3.}}
\author{Yao-ban Chan \\ Fakult\"at f\"ur Mathematik\\ Universit\"at Wien \and  Jean-Fran\c cois Marckert \\ LaBRI, CNRS\\ Université Bordeaux 1 \and Thomas Selig \\ LaBRI, CNRS \\  Université Bordeaux 1 }
\begin{document}

\maketitle

\begin{abstract}
We introduce a new model of a stochastic sandpile on a graph $G$ containing a sink. When unstable, a site sends one grain to each of its neighbours independently with probability $p \in (0,1]$. For $p=1$, this coincides with the standard Abelian sandpile model. In general, for $p\in(0,1)$, the set of recurrent configurations of this sandpile model is different from that of the Abelian sandpile model. We give a characterisation of this set in terms of orientations of the graph $G$. We also define the lacking polynomial $L_G$ as the generating function counting this set according to the number of grains, and show that this polynomial satisfies a recurrence which resembles that of the Tutte polynomial.
\end{abstract}

\section{Introduction}

In this paper, we analyse a stochastic generalisation of the Abelian sandpile model (ASM). Informally (we provide a formal definition later), the ASM operates on a graph where each vertex has a number of `grains of sand' on it. At every unit of time, another grain of sand is added at a random vertex $v$. If this causes the number of grains at $v$ to exceed its degree, $v$ topples, sending one grain to each of its neighbours. This may cause other vertices to topple, and we continue until the configuration is stable, i.e. no vertex can topple anymore. A special vertex, the sink, can absorb any number of grains and never topples. It is possible to show that eventually this model will be trapped in a set of configurations, called recurrent configurations.

This model arose from work by Bak, Tang and Wiesenfeld \cite{BTW1, BTW2} and was named and formalised by Dhar \cite{DH1}. It displays a phenomenon known as self-organised criticality \cite{Turcotte, Jensen}, in which characteristic length or time scales break down in the `critical' steady state. When this happens, the correlation between the number of grains at two vertices obeys a power-law decay, as opposed to an exponential decay often found in models away from criticality. Likewise, the average number of topplings that result from a single grain addition also obeys a power-law distribution. In this sense, the model is `non-local', as grains added at a vertex may have an effect on vertices that are far away. Physically, this model (and self-organised criticality in general) has been used in applications as wide as forest fires \cite{Ricotta}, earthquakes \cite{Bak-earthquake} and sediment deposits \cite{Rothman}.

Mathematically, the ASM has been heavily studied, and we shall not list out all the references here. We refer interested readers to Dhar's papers \cite{DH1,Dhar} and to the excellent review on the subject by Redig \cite{Red}. Some relevant results to our current work discussed in \cite{DH1,Red} include:
\begin{itemize}
\item The number of recurrent configurations is equal to the number of spanning trees of the graph.
\item There is an algorithm (called the \emph{burning algorithm}) which determines if a given configuration is recurrent or not. This finds, or establishes the non-existence of, a subgraph not including the sink on which the configuration is stable. This algorithm constructively establishes a bijection between recurrent configurations and spanning trees.
\item In the steady state of the model, each recurrent configuration is equally likely.
\end{itemize}

In the ASM, the only randomness occurs in the vertices that we add grains to. We introduce a variation on this model, where the topplings themselves are also random. More precisely, we fix a probability $p \in (0,1]$, and when a site is unstable, each neighbour independently has a probability $p$ of receiving a grain from the unstable site. In this way, an unstable site may remain unstable after toppling but, as in the original model, the process continues until the configuration is stable. If $p=1$, this is identical to the ASM.

Although this new model appears similar to the ASM, a closer inspection reveals some qualitative differences. In particular, the model will again become trapped in a set of recurrent configurations, but this set is not equal to the set of recurrent configurations in the ASM. Furthermore, each recurrent configuration is not equally likely, and the steady state measure now depends on $p$. The aim of this paper is to study the behaviour of this new model, particularly in these respects. We prove a characterisation of the recurrent configurations in terms of orientations of the graph edges, and provide a ``Tutte polynomial-like" formula which counts these configurations in terms of their numbers of grains of sand.  

The stochastic sandpile model (SSM) we propose is an appropriate generalisation of the ASM, due to the aforementioned relation between the Tutte polynomial and a counting of the recurrent configurations. A similar result has been proved by L\'opez \cite{Lop} for the ASM; in this model, the lacking polynomial we define later has an expression using the ``standard'' Tutte polynomial\footnote{Indeed, one can pass from their ``level polynomial'' to the lacking polynomial by a change of variables and the multiplication by a monomial factor, since the level of a configuration is $\sum \eta_v -|E|$ if the number of lacking particles is $2|E|-1-\sum \eta_v$.}. See also Cori and Le Borgne \cite{CL} and Bernardi \cite{OB} for combinatorial explanations of this fact. Although several instances of random sandpile models have already been introduced in the literature (see for example Dhar \cite[Section 6]{Dhar}, Kloster \emph{et al.} \cite{KMT}, Manna \cite{Manna}), as far as we are aware none of these models have any known link with Tutte-like polynomials. Variations on the basic sandpile model also include an asymmetric version \cite{Speer}, which devised an analogous algorithm to the burning algorithm called the \emph{script algorithm}.

In Section \ref{sec:model}, we formally define the ASM and introduce our stochastic generalisation of it (the SSM). In Section \ref{sec:result}, we present our results on the SSM, giving detailed proofs in Section \ref{sec:proof}. Finally, we offer a brief conclusion in Section \ref{sec:conclusion}.

\section{The model}
\label{sec:model}

In this section, we formally define our model and its associated notation. We start with a brief review of the established Abelian sandpile model.

\subsection{The ASM}

We first define the class of graphs $G=(V \cup \{s\},E)$ that underly the model. $G$ must be finite, unoriented, connected and loop-free. It may have multiple edges, and it contains a distinguished vertex $s$ that we call the \emph{sink}.   
The set of these graphs is denoted ${\cal G}$. We use the notation $u\sim v$ to denote that $u$ and $v$ are adjacent in $G$, i.e. $\{u,v\}\in E$. 

A sandpile \textit{configuration} on $G$ is a vector $\eta=(\eta_v, v \in V) \in \Z^{ \vert V \vert}$. The number $\eta_v$ represents the number of grains of sand present at the vertex $v$ in the configuration $\eta$. When this number exceeds a certain threshold, the vertex is said to be \emph{unstable}, and will \textit{topple}, sending one grain of sand to each of its neighbours. Typically and throughout this paper, the threshold is set to the degree of that vertex. The sink plays a special role in that it can absorb any number of grains, and as such never topples. 

Two different configurations play an important role in this paper. The first one $\eta^{\max}_G$ (or $\eta^{\max}$ when $G$ is clear from the context) is the configuration 
\beq
\eta^{\max}_G:=(d^G(v), v \in V),
\eq
where $d^G(v)$ is the degree of the vertex $v$ in $G$.  The second one is $\1_a$ for some vertex $a$ in $V$, 
\beq
\1_a:=(\delta_{a,v}, v \in V),
\eq 
where $\delta_{a,v}$ is the Kronecker symbol, meaning that all co-ordinates of $\1_a$ are 0 except for at position $a$, where it is 1. 
\begin{de} 
A configuration $\eta = (\eta_v, v \in V)$ is called \emph{stable} if  $\eta_v \leq d^G(v)$ for all $v \in V$. We write $\Stable(G)$ for the set of all stable configurations on $G$.
\end{de}
\noindent $\eta^{\max}_G$ is clearly the maximum stable configuration in terms of the total number of grains in the configuration.

Now define the toppling operator $T_x$ corresponding to a toppling at $x\in V$ by
\beq\label{eq:full-toppling}
T_x(\eta)=\eta - d^G(x)\1_x+\sum_{y\sim x, y\neq s} \1_y,
\eq
where configurations are added site-by-site. A toppling $T_x$ is called \textit{legal} if $\eta_x > d^G(x)$.

We now define for any configuration $\eta$ its \emph{stabilisation} $\S(\eta)$ as follows. If $\eta$ is stable, then $\S(\eta)=\eta$; otherwise
\beq
\S(\eta) = T_{x_m}(...(T_{x_1} (\eta))...), 
\eq
with the requirements that $T_{x_1},...,T_{x_m}$ is a sequence of legal topplings, and that the configuration $\S(\eta)$ is stable. The fact that this operator is well-defined, i.e. the stabilisation of a configuration is independent of the order of the topplings that is used to stabilise it, is not immediately obvious. We refer the interested reader to Proposition 3.7 (and Lemma 3.6) in Redig \cite{Red}.

\subsubsection*{Markov chain structure of the ASM}

The ASM has a Markov chain structure which is defined as follows. Assume that there are defined some i.i.d. random variables $(X_i,i\geq 1)$ taking their values in $V$ according to a distribution $\mu$ (where the support of $\mu$ is $V$), defined on a common probability space $(\Omega,{\cal A},`P)$. 
The sandpile process starts from any stable configuration $\eta_0$. We define a Markov chain $(\eta_i,i\geq 0)$ with values in $\Stable(G)$. Given $\eta_{i-1}$ for any $i\geq 1$, $\eta_i$ is obtained as follows:
\begin{itemize}
\item Add a grain at position $X_i$ to the configuration $\eta_{i-1}$. Let $\eta'_i$ be the obtained configuration.
\item Let $\eta_i$ be the stabilisation of $\eta_i'$, that is $\eta_i=\S(\eta'_i)$  (in some cases no toppling is needed). 
\end{itemize}
Since a grain can be added anywhere with positive probability, it is immediately seen that from any configuration $\eta$, the maximal configuration $\eta^{max}$ can be reached with positive probability. It follows that the set of recurrent configurations for the Markov chain is the unique recurrent class containing the maximal configuration. We denote this set by $\D(G)$, and as previously noted, it is in bijection with the set of spanning trees of $G$ (see Redig \cite{Red}).

\subsection{The SSM}
\label{sec:SSM}
In this model, we make the topplings random. A probability $p \in (0,1]$ is fixed, and when a site is unstable, each neighbour independently has a probability $p$ of receiving a grain from the unstable site.  In this way, an unstable site may remain unstable after toppling, but as in the original model, the process continues until the configuration is stable. Different topplings are done independently. If $p=1$, this reduces to the ASM, so we assume this is not the case.

There are some slight difficulties in establishing that this model is indeed well-defined. Two of them are taken into account in the proof of Theorem \ref{thm:well-defi} below. A third one arises by noticing that topplings commute in the ASM. However, in the SSM, when two legal topplings are possible, it is not clear that they commute, since we only talk about what to do ``in probability". To solve this, we describe a probability space where the laws of the topplings are indeed those discussed at the beginning of this section and for which the commutation of legal topplings takes place as needed.

We now give a formal definition of the SSM. The definition of terms introduced in relation to the ASM, apart from the toppling operator, are unchanged. All the random variables discussed below are defined on a common probability space $(\Omega,{\cal A},`P)$, and they are all independent.
\begin{itemize}
\item The variables $(X_i,i\geq 0)$ are distributed according to a distribution $\mu$ with support equal to $V$. As before, they represent a sequence of arrival places of grains.
\item For any $x\in V, e \in E$ such that $e = \{x,y\}$ and any $i\geq 0$, $B_i(x,e)$ is a Bernoulli random variable with parameter $p$. It represents the number of particles going from $x$ to $y$ along $e$ due to the $i$th toppling of $x$.
\end{itemize}

We now define $\bT^{(i)}_x$ as the $i$th toppling at $x\in V$
\beq
\bT^{(i)}_x(\eta)=\eta- \sum_{e \ni x}B_i(x,e) \1_{x} +\sum_{y \neq s, e=\{x,y\} \in E}B_i(x,e)\1_{y}.
\eq
We will call any realisation of such a toppling a \textit{stochastic toppling}. Note that with positive probability, the $i$th toppling at $v$ is a \it full toppling, \rm meaning that $B_i(x,e)=1, \forall e \ni x$, in which case $\bT^{(i)}_x$ coincides with the ASM toppling $T_x$.

On the probability space $(\Omega,{\cal A},\mathcal{P})$, the SSM is well defined when a starting configuration is specified. 
We then use the same definition of a legal toppling, and define the stabilisation of a configuration $\eta$ by
\beq
\RS(\eta) = \bT_{x_n}(...(\bT_{x_1} (\eta))...), 
\eq
where we take $\bT_{x_j}=\bT^{(i)}_{x_j}$ if the vertex $x_j$ appears exactly $i-1$ times in the sequence of topplings $\bT_{x_1},...,\bT_{x_{j-1}}$, and with the same requirements as for the ASM, i.e. that $\bT_{x_1},...,\bT_{x_m}$ is a legal sequence of topplings and $\RS(\eta)$ is stable. The notation $\RS$ stands for the ``random stabilisation'' we define.

\begin{thm}\label{thm:well-defi} For any graph $G$ in ${\cal G}$, the stabilisation operator $\RS$ is almost surely well-defined.
\end{thm}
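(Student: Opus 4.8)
The plan is to establish two facts which together say that $\RS(\eta)$ is almost surely a well-defined function of the starting configuration $\eta$ and the pre-sampled variables $B_i(x,e)$: first, that almost surely every legal sequence of stochastic topplings started from $\eta$ is finite, so that a stabilising sequence exists; and second, that the resulting stable configuration, together with the number of topplings performed at each vertex, does not depend on which legal sequence is used. I regard the finiteness (termination) as the crux; the order-independence then follows by an exchange argument modelled on the abelian property of the ASM, made legitimate precisely because the random toppling operators have been frozen in advance.

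For termination I would first record the conservation law: a single toppling $\bT^{(i)}_x$ changes the total number of grains on $V$ by $-\sum_{e=\{x,s\}\in E}B_i(x,e)\leq 0$, since every grain leaving $x$ either lands on a non-sink neighbour (staying in $V$) or is absorbed by the sink. Moreover a legal toppling of $x$ requires $\eta_x>d^G(x)\geq\sum_{e\ni x}B_i(x,e)$, so all grain counts remain nonnegative. Hence along any legal sequence the total number of grains on $V$ is nonincreasing and bounded below by $0$, so the total number of grains ever sent to the sink is at most the initial total $N_0<\infty$. Next I would work on the event $E$ that for every $x$ and every neighbour $y$ the set $\{i:B_i(x,\{x,y\})=1\}$ is infinite; since these are independent Bernoulli$(p)$ variables with $p>0$, the second Borel--Cantelli lemma gives $\mathcal{P}(E)=1$.

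On $E$, suppose for contradiction that some legal sequence is infinite, and let $A\subseteq V$ be the nonempty set of vertices toppled infinitely often. Because the successive topplings of a vertex use the indices $1,2,3,\dots$ in order, on $E$ each $x\in A$ sends a grain to each of its neighbours infinitely often. If some $x\in A$ were adjacent to the sink it would send infinitely many grains there, contradicting the bound $N_0$, so no vertex of $A$ touches the sink. If $x\in A$ had a non-sink neighbour $y\notin A$, then $y$ topples only finitely often yet receives infinitely many grains from $x$, forcing $\eta_y\to\infty$ and again contradicting $N_0$; hence every non-sink neighbour of a vertex of $A$ lies in $A$. Taking any $x\in A$ and a path from $x$ to $s$ in the connected graph $G$, the last vertex of that path lying in $A$ would then be adjacent either to $s$ or to a non-sink vertex outside $A$, both impossible. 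So no infinite legal sequence exists on $E$, and every maximal legal sequence is finite and ends at a stable configuration, giving existence.

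Finally, for order-independence I would run the usual abelian exchange argument, which is valid here because, once the $B_i(x,e)$ are fixed, the $i$th toppling at $x$ is a fixed integer vector, distinct topplings at distinct vertices commute as vector operations, and a toppling only adds grains to other vertices, so it never destroys the legality of a pending toppling. Concretely, given two finite legal stabilising sequences from $\eta$, let $x$ be the vertex toppled first in the first sequence; since only a toppling of $x$ can decrease $\eta_x$ while $x$ is unstable, $x$ must also appear in the second sequence, and its first occurrence there is again $\bT^{(1)}_x$, which can be slid to the front without changing legality or the final configuration. Cancelling this common first toppling and inducting on the (finite) length yields equal per-vertex toppling counts and identical final configurations, so $\RS(\eta)$ is almost surely well-defined. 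The main obstacle is the termination step: partial topplings need not reduce the instability of the toppled vertex, so a naive energy or potential argument does not obviously decrease, and it is the Borel--Cantelli-plus-connectivity propagation that rescues it.
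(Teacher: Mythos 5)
Your proof is correct, but your termination argument takes a genuinely different route from the paper's. The paper leans on the ASM: it lets $N$ be the maximum number of full topplings at any vertex needed by the deterministic model to stabilise a configuration obtained by adding one grain to a stable one, observes that each $\bT^{(i)}_x$ coincides with the full toppling $T_x$ with positive probability, and invokes a renewal argument to conclude that an infinite toppling sequence would almost surely contain more than $N$ full topplings at every vertex, leaving the final contradiction (``from here the conclusion follows'') to the reader --- a step that implicitly requires some comparison of the stochastic dynamics with the deterministic one. Your argument is self-contained: the conservation law (the number of grains on $V$ is nonincreasing and nonnegative, so at most the initial total $N_0$ ever reaches the sink), Borel--Cantelli applied to each pair of a vertex and an incident edge, and the connectivity propagation showing that the set $A$ of infinitely-toppled vertices would have to contain all non-sink neighbours of its elements while avoiding the sink, which is impossible in a connected graph. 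This avoids any appeal to ASM finiteness and in fact yields the cleaner statement that, almost surely, \emph{every} legal toppling sequence terminates, not merely the one actually performed. For the order-independence half, you and the paper do essentially the same thing: freeze the randomness so that the $i$th toppling at $x$ is the fixed vector operation $\bT^{(i)}_x$, then run the abelian exchange argument; the paper outsources this to Lemma 3.6 and Proposition 3.7 of Redig ``with a simple adjustment'', while you spell out the slide-and-cancel induction (the re-indexing of toppling counts after cancelling the common first toppling, which you gloss over, is exactly that adjustment). Both halves of your proposal stand.
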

\begin{proof} Two things have to be shown. The first one is that the stabilisation process eventually ends. Here, as opposed to the ASM, a loop may appear in the toppling process. For example, if $a \sim b$ and $a$ and $b$ are both unstable, and for all $i$ we have $\bT_a^{(i)} = \1_b$ and $\bT_b^{(i)} = \1_a$, then a repeated loop occurs where $a$ only sends one grain to $b$ and $b$ only sends one grain to $a$, \emph{ad infinitum}. This example is obviously contrived, and it is easy to see that this occurs with probability 0, but some more complex loops could occur. 

To show that this almost surely does not happen, we note that in the ASM, stabilisation takes a finite number of topplings regardless of the initial configuration. Let $N_v$ be the maximum number of full topplings at $v$ needed (by the ASM) to stabilise any configuration arising from the addition of 1 grain to a stable configuration; let $N=\max_{v\in V} N_v$. Since there are a finite number of such configurations, $N$ is finite. Now for any $i$ and $x$, $\bT^{(i)}_x$ is the ``full toppling'' $T_x$ with positive probability. A simple renewal argument shows that in an infinite sequence of topplings, the number of full topplings done at every vertex is almost surely larger than $N$. From here the conclusion follows (note that this doesn't mean that full topplings are needed to stabilise a configuration).

The second thing we have to prove is that that the stabilisation operator is well-defined; that is, starting from an unstable configuration $\eta$, changing the order of the site stabilisations does not change the final result $\RS(\eta)$. This is the case on $(\Omega,{\cal A},`P)$ since whatever is done elsewhere, the $i$th toppling at $v$ is $\bT^{(i)}_v$. We can then use the same argument as in Lemma 3.6 and Proposition 3.7 of Redig \cite{Red}, with a simple adjustment to take into account the fact that $i\mapsto \bT^{(i)}_x$ is not constant. 
\end{proof}

\subsubsection*{Markov chain structure of the SSM}

The SSM has a Markov chain structure which is analogous to the ASM. The sandpile process starts from any stable configuration $\eta_0$. We define a Markov chain $(\eta_i,i\geq 0)$ with values in $\Stable(G)$. Given $\eta_{i-1}$ for any $i\geq 1$, we obtain $\eta_i$ as follows:
\begin{itemize}
\item Add a grain at position $X_i$ to the configuration $\eta_{i-1}$. Let $\eta'_i$ be the resulting configuration.
\item Let $\eta_i$ be the stabilisation of $\eta_i'$, that is $\eta_i=\RS(\eta'_i)$  (in some cases no toppling is needed). 
\end{itemize}

Note that $\RS$ is not defined \emph{ex nihilo} as in the ASM. On $(\Omega,{\cal A},`P)$, when $\RS$ is applied, its action depends on all the previous topplings taken to reach the current state. In view of this, it is more proper to write $\RS_i$ or even $\RS_i(\eta_0)$ instead of $\RS$. However, for the sake of brevity we write $\RS$ instead. 

Once again, since the support of $\mu$ is $V$, the maximal configuration $\eta^{max}$ is recurrent, and the set of recurrent configurations for the Markov chain is the unique recurrent class containing $\eta^{max}$. We denote this set by $\s(G)$, and call any element of it \textit{stochastically recurrent} (or \emph{SR}). We call the configurations which are recurrent under the ASM \emph{deterministically recurrent} or \emph{DR} in contrast.

\begin{pro}\label{DS} For any $G\in {\cal G}$,
\[\D(G) \subseteq \s(G).\]
\end{pro}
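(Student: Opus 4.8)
The plan is to show that every $\eta \in \D(G)$ is accessible from $\eta^{\max}$ under the SSM Markov chain. Since adding grains lets one reach $\eta^{\max}$ from any configuration (each position carries positive $\mu$-mass, as the support of $\mu$ is $V$), such accessibility places $\eta$ in the unique recurrent class containing $\eta^{\max}$, which is precisely $\s(G)$. Thus the whole burden reduces to a one-sided reachability statement.

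First I would unpack what $\eta \in \D(G)$ gives us. As $\D(G)$ is the recurrent class of the ASM containing $\eta^{\max}$, the configuration $\eta$ is accessible from $\eta^{\max}$ in the ASM: there is a finite sequence of arrival sites $a_1,\dots,a_k$ so that, starting from $\eta^{\max}$ and alternately adding one grain at $a_j$ and applying the deterministic stabilisation $\S$, one passes through configurations $\eta^{\max}=\zeta_0,\zeta_1,\dots,\zeta_k=\eta$.

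The key step is to realise this exact ASM trajectory inside the SSM with positive probability. The central observation (noted just after the definition of $\bT^{(i)}_x$) is that each stochastic toppling $\bT^{(i)}_x$ equals the full toppling $T_x$ on the event $\{B_i(x,e)=1 \text{ for all } e \ni x\}$, which has probability $p^{d^G(x)}>0$. Conditioning on the event that every toppling performed during a stabilisation is full, the SSM dynamics coincide step-by-step with the ASM dynamics; since $\S$ is order-independent and stabilises $\zeta_{j-1}+\1_{a_j}$ in a finite number of topplings, the random stabilisation $\RS$ returns the same configuration $\zeta_j$ on this event. Because the Bernoulli variables attached to distinct topplings are independent and the total number of topplings across the whole trajectory is finite, the probability that all of them are full is a finite product of positive numbers, hence positive. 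Multiplying by the positive probabilities $\mu(a_1)\cdots\mu(a_k)$ that the grains arrive at the prescribed sites, I conclude that the SSM reaches $\eta$ from $\eta^{\max}$ with positive probability, completing the argument.

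The main obstacle will be arguing cleanly that conditioning on all topplings being full genuinely forces the SSM stabilisation to agree with the ASM one, rather than merely producing the same endpoint by accident. This rests on the finiteness of the number of topplings (controlled via the bound $N$ from the proof of Theorem \ref{thm:well-defi}), on the order-independence of $\S$, and on verifying that the indexing $i \mapsto \bT^{(i)}_x$ keeps the relevant full-toppling events mutually independent so that their intersection retains positive probability.
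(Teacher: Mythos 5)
Your proof is correct and takes essentially the same approach as the paper's (much terser) argument, which likewise rests on the facts that the deterministic stabilisation $\S$ uses only finitely many topplings and that each of these is reproduced as a full toppling in the SSM with positive probability. The extra care you flag about the indexing $i \mapsto \bT^{(i)}_x$ and the mutual independence of the full-toppling events is sound but already guaranteed by the construction of the probability space $(\Omega,{\cal A},`P)$, where all the $B_i(x,e)$ are independent by definition.
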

\begin{proof}
This is easily seen because $\S$ always uses a finite number of topplings. When constructing a stochastic stabilisation, the deterministic stabilisation must have a positive probability of being reconstructed using only full topplings.
\end{proof}

The two sets $\D(G)$ and $\s(G)$ are not equal in general. A simple counter-example is shown in Figure \ref{fig:triangle}. For this graph, the configuration which has $2$ grains at each vertex is SR. It can be reached from the DR configuration $[3,1,2]$ (with 3 grains at the vertex $v_1$ leading to the sink), by adding a grain to $v_1$ and toppling it, sending one grain to the sink and one to $v_2$. However, it is not DR as it fails the burning algorithm test --- the graph with the sink removed is a forbidden subconfiguration.

\begin{figure}[h]
\psfrag{s}{$s$}
\psfrag{1}{$v_1$}
\psfrag{2}{$v_2$}
\psfrag{3}{$v_3$}
\psfrag{4}{$v_4$}
\psfrag{5}{$v_5$}
\centerline{\includegraphics{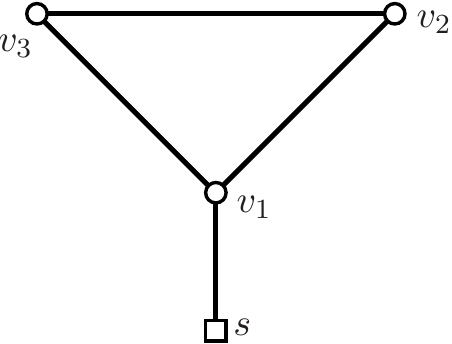}}
\captionn{\label{fig:triangle}A graph for which $\D(G)$ is a strict subset of $\s(G)$. In this case $\D(G)=\{[3, 1, 2], [3, 2, 1], [3, 2, 2]\}$, and $\s(G)=\{[2, 2, 2]\} \cup \D(G)$.}
\end{figure}

To determine $\s(G)$, it is clear that $\eta$ is SR if and only if there exists a finite sequence of adding of grains and topplings such that $\eta$ is reached from $\eta^{max}$ through this sequence. In the rest of the paper, we prove a more useful characterisation of the stochastically recurrent states.

\section{Main results}\label{sec:result}

In this section, we state the main results of this paper. Our two main results are a characterisation of the stochastically recurrent states in terms of graph orientations and a recurrence for the lacking polynomial (which we define below). Proofs of these results will be given in Section \ref{sec:proof}.

\subsection{Graph orientations}

Our first result characterises the SR states in terms of graph orientations. Take a graph $G=(V \cup \lbrace s \rbrace, E) \in {\cal G}$. We define an orientation on $G$ to be an orientation of each edge of $E$ (when $G$ has multiple edges, all of them are oriented independently). We write $(a,b)$ or $a\to b$ to denote that the edge $\{a,b\}$ is oriented from $a$ to $b$.

\begin{de} \label{comp_or}
Let $G=(V \cup \lbrace s \rbrace, E) \in {\cal G}$. Take a sandpile configuration $\eta$ on $G$. We define the \emph{lacking number} of $\eta$ at $v$ as the number of grains at $v$ less than its maximum value:
\[l^G_\eta(v) = d^G(v) - \eta_v.\]
Now let $O$ be an orientation on $G$ and let $\In^G_O(v)$ be the number of incoming edges to $v$ in $O$. We say that $\eta$ is \emph{compatible} with $O$ (and likewise $O$ is compatible with $\eta$) if $\forall v \in V$,
\beq\label{comp_cond}
\In^G_O(v) \geq 1 + l^G_{\eta}(v).
\eq
We denote the set of stable configurations that are compatible with $O$ as $\comp(O)$.
\end{de}
In situations where it is clear, we will omit the superscript $G$ for brevity.

Note that there may be several configurations compatible with a particular orientation. Likewise, there may be several orientations compatible with any given configuration. For instance, the maximal configuration $\eta^{max}$ is compatible with any orientation where each vertex has at least one incoming edge. 

\begin{thm} \label{car_sr}
Let $G=(V \cup \lbrace s \rbrace, E) \in {\cal G}$. Then a (stable) configuration $\eta$ is stochastically recurrent if and only if there exists an  orientation $O$ on $G$ such that $\eta \in \comp(O)$. In other words,
\beq
\s(G) = \bigcup_O \comp(O),
\eq
where the union is taken over all orientations on $G$.
\end{thm}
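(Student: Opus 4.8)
The plan is to prove the two inclusions separately. The statement asserts $\s(G) = \bigcup_O \comp(O)$, where the union is over all orientations $O$ of $G$. I would establish this by showing (i) that every stochastically recurrent configuration is compatible with some orientation, and (ii) that every configuration compatible with some orientation is stochastically recurrent. The natural combinatorial object connecting a toppling process to an orientation is the record of which edges carry a grain during a toppling: since each stochastic toppling $\bT^{(i)}_x$ sends a grain along edge $e = \{x,y\}$ precisely when $B_i(x,e)=1$, a single ``effective'' toppling at each vertex suggests an orientation obtained by orienting each edge in the direction the grain traveled.

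**The hard direction: $\bigcup_O \comp(O) \subseteq \s(G)$.**

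I expect the main obstacle to be showing that any $\eta \in \comp(O)$ is reachable from $\eta^{\max}$ by a legal sequence of grain additions and stochastic topplings. Given the compatibility condition $\In^G_O(v) \geq 1 + l^G_\eta(v)$, the idea is to engineer a controlled toppling sequence whose net effect produces exactly $\eta$. Starting from $\eta^{\max}$, I would add a grain at each vertex to make it unstable, then topple each vertex exactly once in a carefully chosen order, where the $i$th toppling at $x$ is arranged to fire along precisely those edges prescribed by $O$ --- i.e.\ we condition on the Bernoulli variables $B_i(x,e)$ taking the values that realise the orientation $O$. Because each such realisation has positive probability, it suffices to show that \emph{some} such choreographed sequence is legal and terminates in $\eta$. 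The net change at $v$ after one toppling of each vertex is $\eta^{\max}_v - (\text{out-edges at } v) + (\text{in-edges at } v \text{ that fire})$; tuning the firing pattern against $\In^G_O(v)$ and using the inequality \eqref{comp_cond} should let me hit $\eta_v = d^G(v) - l^G_\eta(v)$ on the nose. The delicate point is legality: I must order the topplings so that each vertex is unstable at the moment it topples, which I would handle by choosing an ordering consistent with the orientation $O$ (e.g.\ a vertex topples only after receiving enough incoming grains), possibly adding auxiliary grains to guarantee instability and toppling them back down.

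**The reverse direction: $\s(G) \subseteq \bigcup_O \comp(O)$.**

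For this inclusion I would argue that if $\eta$ is reachable from $\eta^{\max}$, then the history of topplings used to reach it induces a compatible orientation. Concretely, consider the sequence of stochastic topplings realising $\eta$ from $\eta^{\max}$; for each vertex $v$, the number of grains it receives during the process must at least compensate for the deficit $l^G_\eta(v)$ relative to its maximum, because $v$ started at $d^G(v)$ and ended at $\eta_v = d^G(v) - l^G_\eta(v)$. Counting the grains flowing into $v$ versus those flowing out, and extracting from this flow a single orientation of each edge (orienting $\{x,y\}$ toward whichever endpoint receives the ``decisive'' grain), I would verify that the resulting orientation $O$ satisfies $\In^G_O(v) \geq 1 + l^G_\eta(v)$. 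The ``$+1$'' term should emerge from the fact that reaching a recurrent state requires at least one net grain arrival beyond merely balancing the deficit --- essentially because $\eta^{\max}$ itself is recurrent and any legal trajectory must inject grains. I anticipate that formalising this grain-counting into a clean orientation --- handling multiple edges, choosing which single grain is ``decisive,'' and ensuring the inequality holds simultaneously at every vertex --- is where the bookkeeping becomes subtle, and I would likely prove it by induction on the length of the toppling sequence or by a conservation (flow) argument on the graph.
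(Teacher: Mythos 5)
Both inclusions in your sketch have genuine gaps, and in both cases the missing piece is exactly where the paper's proof does its real work.

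For the hard direction $\bigcup_O \comp(O) \subseteq \s(G)$, your plan --- add one grain at each vertex of $\eta^{\max}$, then topple every vertex exactly once with a firing pattern read off from $O$ --- fails for a conservation reason before any question of legality arises. Starting from $\eta^{\max}$ plus one grain per vertex, the system holds $\sum_{v\in V} d^G(v) + |V|$ grains, while the target $\eta$ holds $\sum_{v\in V} d^G(v) - \ell(\eta)$; so $|V| + \ell(\eta)$ grains must exit through the sink. But in a single round, only sink-adjacent vertices can send grains to the sink, and each sends at most as many as it has edges to $s$. In the paper's own triangle example (Figure 1), reaching $[2,2,2]$ from this starting point would require $4$ grains to exit through a single sink edge in one round --- impossible. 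So arbitrarily long toppling sequences are needed, and "tuning the firing pattern" over one round cannot produce them; adding auxiliary grains only worsens the surplus. The paper's proof supplies the missing mechanism by induction on $|V|$: pick a vertex $u$ adjacent to $s$ with all $\{u,s\}$ edges oriented into $u$, contract $u$ into the sink to get $G_u$, check that the induced configuration $\eta^u$ (with $\eta^u_v = \eta_v - k_v$, where $k_v$ counts edges $v\to u$) is compatible with the induced orientation, invoke the induction hypothesis to get a history on $G_u$, replay it on $G$, then pump $u$ up to minimal instability by additions and repeated topplings into the sink, and finish with one controlled toppling at $u$ sending exactly $k_v$ grains to each neighbour $v$. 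The repeated topplings of $u$ into the sink are precisely how the surplus grains drain out; nothing in your sketch plays this role.

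For the reverse direction $\s(G) \subseteq \bigcup_O \comp(O)$, your instinct (induce an orientation from a history starting at $\eta^{\max}$) matches the paper, but your orientation rule points the wrong way. When $a$ topples and sends a grain to $b$, it is the \emph{sender} $a$ whose lacking number grows and who therefore needs extra incoming edges to keep $\In_O(a) \geq 1 + l_\eta(a)$; accordingly, the paper orients every edge along which a grain passes \emph{towards the sender} (an edge oriented $a \to b$ is reversed to $b \to a$ when a grain travels from $a$ to $b$). Your rule --- orient towards the endpoint that \emph{receives} the decisive grain --- does the opposite: a vertex that topples late in the history, emptying itself along many edges, would end up with those edges outgoing exactly when its deficit is largest, and compatibility fails. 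Moreover, the ``$+1$'' does not come from any net-flow count: the paper gets it by starting from an orientation compatible with $\eta^{\max}$ (orient a spanning tree away from $s$, so every vertex has an incoming edge) and proving, by induction along the history, that compatibility is preserved at every single step --- grain additions trivially, and a toppling at $a$ sending $\alpha_j$ grains to each neighbour $b_j$ because legality gives $l(a) \leq \sum_j \alpha_j - 1$ while the reversal rule gives $\In_O(a) \geq \sum_j \alpha_j$, and at each $b_j$ at most $\alpha_j$ in-edges are lost while $l(b_j)$ drops by $\alpha_j$. Without this invariant, and with the orientation reversed, your flow-counting argument does not yield the inequality.
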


Furthermore, there is also a way of characterising DR configurations using orientations.

\begin{thm}\label{th:car_dr}
A (stable) configuration $\eta$ is deterministically recurrent if and only if there exists an orientation $O$ of $G$ with no directed cycles such that $\eta\in \comp(O)$.
\end{thm}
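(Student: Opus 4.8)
The plan is to prove both implications through Dhar's \emph{burning algorithm}, exploiting the fact that a successful run of the burning algorithm is essentially the same datum as an acyclic orientation compatible with $\eta$. Recall that the burning algorithm marks the sink $s$ as burnt at time $0$ and, at each step, burns any not-yet-burnt vertex $v$ for which the number of edges joining $v$ to already-burnt vertices is at least $l^G_\eta(v)+1$; equivalently, $\eta_v$ strictly exceeds the number of edges from $v$ to unburnt vertices. The configuration $\eta$ is DR precisely when this process burns every vertex of $V$. The observation I would record first is that the burning condition ``(edges to burnt vertices) $\ge l^G_\eta(v)+1$'' is literally the compatibility inequality $\In^G_O(v)\ge 1+l^G_\eta(v)$ of Definition \ref{comp_or}, once the burnt/unburnt history is read off as an orientation. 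I would also note that the burning predicate is monotone in the set of burnt vertices, so the final burnt set is independent of the order in which burnable vertices are processed; hence it is legitimate to burn one vertex at a time, and to certify $\eta \in \D(G)$ it suffices to exhibit a single linear order in which every vertex becomes burnable in turn. In contrast with Theorem \ref{car_sr}, where an \emph{arbitrary} compatible orientation suffices, here acyclicity must be built in, and this is exactly what a burning order supplies.

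For the forward direction, assume $\eta\in\D(G)$. Running the burning algorithm one vertex at a time produces a linear order $s=v_0,v_1,\dots,v_n$ of $V\cup\{s\}$ in which each $v_j$ is burnable at the moment it is burnt. I would orient every edge $\{v_i,v_j\}$ with $i<j$ from $v_i$ to $v_j$ (all parallel edges oriented the same way), giving an orientation $O$ that is acyclic because every edge points forward in the linear order. When $v_j$ is burnt the burnt vertices are exactly $v_0,\dots,v_{j-1}$, so the edges from burnt vertices to $v_j$ are precisely the incoming edges of $v_j$ in $O$; the burning condition then reads $\In^G_O(v_j)\ge l^G_\eta(v_j)+1$, which is exactly $\eta\in\comp(O)$. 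Hence $O$ is an acyclic orientation compatible with $\eta$.

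For the converse, suppose $O$ is an acyclic orientation with $\eta\in\comp(O)$. Because $\eta$ is stable we have $l^G_\eta(v)\ge 0$, so compatibility forces $\In^G_O(v)\ge 1$ for every $v\in V$; thus no non-sink vertex can be a source of $O$. Since a finite acyclic orientation always has at least one source, the sink $s$ must be a source, and I can choose a topological order $s=w_0,w_1,\dots,w_n$ of $O$. I would then run the burning algorithm along this order: when $w_j$ is reached, all of its in-neighbours precede it and are already burnt, so the number of edges from burnt vertices to $w_j$ is at least $\In^G_O(w_j)\ge l^G_\eta(w_j)+1$, making $w_j$ burnable. By induction every vertex burns, so $\eta\in\D(G)$.

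I expect the main difficulty to be bookkeeping rather than conceptual: pinning down the burning inequality in the exact form $\In^G_O(v)\ge 1+l^G_\eta(v)$ while correctly counting \emph{edges} (not neighbours) to burnt vertices in the multigraph setting, and handling the sink consistently as the vertex burnt at time $0$ carrying no compatibility constraint. The one genuinely substantive point is the converse's observation that stability together with compatibility forces $s$ to be a source; this is what guarantees the topological order can start at the sink and hence serve as a valid burning sequence. Without it the reduction to the burning algorithm would not get off the ground.
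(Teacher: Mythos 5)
Your proof is correct and takes essentially the same route as the paper's: both directions translate between a burning order and a topological order of an acyclic orientation, with the burning condition read off as exactly the compatibility inequality $\In^G_O(v)\ge 1+l^G_\eta(v)$. The only cosmetic difference is at the sink, where the paper reorients sink-adjacent edges away from $s$ to start the topological order there, while you instead observe that stability plus compatibility forces $s$ to be the unique source --- both work.
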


This theorem is intuitive given the bijection between DR configurations and spanning trees, as any spanning tree can induce a (not necessarily unique) orientation with no directed cycles.

\subsection{The lacking polynomial}

\begin{figure}[ht]
\psfrag{s}{$s$}
\psfrag{1}{$v_1$}
\psfrag{2}{$v_2$}
\psfrag{3}{$v_3$}
\psfrag{4}{$v_4$}
\psfrag{5}{$v_5$}
\centerline{\includegraphics[height=3.5cm]{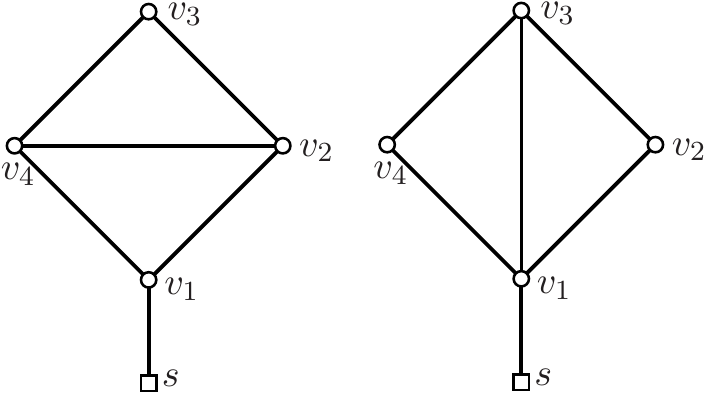}}
\captionn{\label{fig:exa1} The graph $G$ on the left has $\D(G)=\{[3, 1, 2, 3], [3, 2, 1, 3], [3, 2, 2, 3], [3, 3, 1, 2], [3, 3, 1, 3], [3, 3, 2, 1], [3, 3, 2, 2], [3, 3, 2, 3]\}$ and $\s(G)=\D(G)\cup \{[1, 3, 2, 3], [2, 2, 2, 3], [2, 3, 1, 3], [2, 3, 2, 2], [2, 3, 2, 3], [3, 2, 2, 2]\}$. Thus the lacking polynomial is $L_G(x)=1+4x+9x^2$. The graph on the right has lacking polynomial $1+4x+8x^2$. The DR configurations are equinumerous in both graphs since they are in bijection with the sets of spanning trees, which can be identified with each other. This does not hold for the SR configurations.}
\end{figure}
Our second result uses Theorem \ref{car_sr} in order to classify SR configurations according to the total number of grains, or equivalently to the number of grains removed from the maximal configuration. We do this by means of the lacking polynomial, which we now define.

\begin{de}
Let $G=(V \cup \lbrace s \rbrace, E) \in {\cal G}$. The \emph{lacking polynomial} $L_G$ of $G$ is the generating function of the stochastically recurrent configurations on $G$, with $x$ conjugate to the number of lacking particles in the configuration:
\begin{equation}\label{eq:L}
L_G(x) = \sum_{\eta \in \s(G)} x^{\ell(\eta)},  
\end{equation}
where
\[\ell(\eta) = \sum_{v\in V} l^G_v(\eta).\]
\end{de}
An example of the lacking polynomial is shown in Figure \ref{fig:exa1}. Note that we can use the lacking polynomial to count the number of SR configurations, as $|\s(G)| = L_G(1)$.

Before stating the main result of this section (Theorem \ref{thm:LP}), which gives a Tutte-like formula to compute $L_G$, we state (without proof) some propositions concerning some special graphs which serve as an initialisation for the computations (since $L_G$ will be expressed in terms of the lacking polynomials of some graphs smaller than $G$). Related illustrations can be found in Figure \ref{fig:illus}.
\begin{pro}\label{pro:init_cond}
Let $G = (V \cup \lbrace s \rbrace,E) \in {\cal G}$.
\begin{enumerate}
\item If $V=\{u\}$ and there are $k$ edges between $u$ and $s$, then 
\[L_G(x)= \sum_{i=0}^{k-1}x^{i}.\]
\item If $G$ is a tree, then $L_G(x)=1$.
\item If we can write $G$ as the union of connected graphs $G_i = (V_i\cup \lbrace s \rbrace, E_i), i = 1, \ldots, k$, so that the $V_i$ are mutually disjoint, then
\[L_G(x) = \prod_{i=1}^k L_{G_i}(x).\]
We say that $G$ is the \emph{product} of the $G_i$.
\end{enumerate}
\end{pro}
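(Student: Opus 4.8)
The plan is to derive all three parts directly from the orientation characterisation of Theorem \ref{car_sr}, $\s(G)=\bigcup_O\comp(O)$, which turns each statement into elementary book-keeping about incoming edges; I will also use the fact, noted in the text, that $\eta^{\max}$ is always stochastically recurrent. For part (1), the single vertex $u$ has degree $d^G(u)=k$, so a stable configuration is just an integer $\eta_u\in\{0,\dots,k\}$ with lacking number $l^G_\eta(u)=k-\eta_u$. Any orientation of the $k$ parallel edges $\{u,s\}$ yields $\In^G_O(u)\in\{0,\dots,k\}$, the maximum $k$ being attained by orienting every edge towards $u$. By Theorem \ref{car_sr}, $\eta$ is SR iff some orientation satisfies $\In^G_O(u)\geq 1+(k-\eta_u)$, which is feasible iff $k\geq 1+(k-\eta_u)$, i.e.\ iff $\eta_u\geq 1$. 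Hence the SR configurations are exactly those with $\eta_u\in\{1,\dots,k\}$, their lacking numbers run through $\{0,1,\dots,k-1\}$, and $L_G(x)=\sum_{i=0}^{k-1}x^i$.

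For part (2), I would run a global edge count. If $\eta$ is SR with compatible orientation $O$, summing \eref{comp_cond} over $v\in V$ gives $\sum_{v\in V}\In^G_O(v)\geq |V|+\ell(\eta)$. On the other hand each oriented edge contributes at most one unit to $\sum_{v\in V}\In^G_O(v)$ (precisely when its head lies in $V$), so this sum is at most $|E|$. Combining the two bounds yields $\ell(\eta)\leq |E|-|V|$. A tree on $V\cup\{s\}$ has exactly $|E|=|V|$ edges, which forces $\ell(\eta)\leq 0$, hence $\ell(\eta)=0$ since every lacking number is non-negative for a stable configuration. Thus the only candidate SR configuration is $\eta^{\max}$, and as $\eta^{\max}$ is always SR we get $\s(G)=\{\eta^{\max}\}$ and $L_G(x)=1$.

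For part (3), the disjointness hypothesis is what makes everything factorise. Since the $V_i$ are disjoint and $G=\bigcup_i G_i$, every edge lies in exactly one $E_i$ and joins two vertices of $V_i\cup\{s\}$, so an orientation $O$ of $G$ is precisely a tuple $(O_1,\dots,O_k)$ of orientations of the $G_i$. For $v\in V_i$ all incident edges lie in $E_i$, whence $d^G(v)=d^{G_i}(v)$, $\In^G_O(v)=\In^{G_i}_{O_i}(v)$ and $l^G_\eta(v)=l^{G_i}_{\eta^{(i)}}(v)$ for the corresponding restriction $\eta^{(i)}$ of $\eta$. Therefore the compatibility condition \eref{comp_cond} on $G$ splits as $\eta\in\comp(O)\iff \eta^{(i)}\in\comp(O_i)$ for all $i$, and since the $O_i$ may be chosen independently, Theorem \ref{car_sr} gives $\eta\in\s(G)\iff \eta^{(i)}\in\s(G_i)$ for all $i$. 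Using $\ell(\eta)=\sum_i\ell(\eta^{(i)})$ and expanding $L_G(x)=\sum_{\eta\in\s(G)}x^{\ell(\eta)}$ over this product set then factors it as $\prod_{i=1}^k L_{G_i}(x)$.

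None of the three parts is genuinely difficult once Theorem \ref{car_sr} is in hand; the substance lies in the quantifier manipulation and the edge count rather than in any hard estimate. The step I expect to need the most care is the factorisation in part (3): one must verify that orientations, degrees, incoming counts and lacking numbers are all genuinely local to each $G_i$ (which rests on the $V_i$ being disjoint and on $s$ being the only shared vertex), and that the existential quantifier over orientations distributes over the product precisely because the restrictions $O_i$ are chosen independently. The edge-counting bound in part (2) is the other point to state cleanly, since it is exactly the inequality $\ell(\eta)\leq |E|-|V|$ that specialises to $\ell(\eta)=0$ for trees.
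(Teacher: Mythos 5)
The paper states this proposition without proof, so there is no written argument to compare yours against; the relevant question is whether your derivation can serve as the missing proof inside the paper's logical structure. For parts (2) and (3) it can, and both of your arguments are correct. The factorisation in (3) rests, as you say, on the fact that the $E_i$ partition $E$ (no edge can join $V_i$ to $V_j$ for $i \neq j$, and loops at $s$ are excluded), so degrees, incoming-edge counts and lacking numbers are all computed inside a single $G_i$, and the existential quantifier over orientations splits into independent quantifiers. Your edge count in (2), $\ell(\eta) \leq |E| - |V|$ for every SR configuration, is clean and in fact re-proves the upper bound on the degree in Corollary \ref{cor:degree} without any deletion--contraction induction.

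Part (1), however, cannot be proved the way you propose without making the paper circular. The paper's proof of Theorem \ref{car_sr} --- specifically the base case $|V|=1$ of the induction establishing $\bigcup_O \comp(O) \subseteq \s(G)$ in Section 4.1 --- invokes precisely Proposition \ref{pro:init_cond}(1) to assert that the configurations with $l_\eta(u) \leq k-1$ are SR. Hence part (1) is logically upstream of Theorem \ref{car_sr} and must be proved directly from the Markov chain, which is easy: starting from $\eta^{\max}$ (that is, $\eta_u = k$), add one grain and perform a single stochastic toppling in which exactly $k+1-j$ of the $k$ edges fire; for each $1 \leq j \leq k$ this event has positive probability since $p \in (0,1)$, and it leaves the stable configuration $\eta_u = j$. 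Conversely, $\eta_u = 0$ is not reachable from $\eta^{\max}$: an unstable pile holds at least $k+1$ grains and each toppling removes at most $k$ of them, so every stabilisation ends with at least one grain. Thus $\s(G) = \{\eta : 1 \leq \eta_u \leq k\}$ and $L_G(x) = \sum_{i=0}^{k-1} x^i$. Once part (1) is established this way, your use of Theorem \ref{car_sr} for parts (2) and (3) is legitimate, because the paper's proof of that theorem uses no other piece of this proposition.
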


We now state that the pruning of ``tree branches" of a graph $G$ does not change its lacking polynomial.
\begin{de}\label{def_TB}
Let $G = (V \cup \lbrace s \rbrace,E) \in {\cal G}$. A \emph{tree branch} of $G$ is a subgraph  $T = (V' \cup \lbrace r \rbrace, E')$ of $G$ which is a tree attached to the rest of $G$ at the vertex $r$. In other words, $T$ is a tree, $d^T(v) = d^G(v)$ for any  $v \in V'$, and $r\in V\setminus V'$.
\end{de}

\begin{lem}\label{pro_TB}
Let $G = (V \cup \lbrace s \rbrace,E) \in {\cal G}$, and let $T = (V' \cup \lbrace r \rbrace, E')$ be a tree branch of $G$. Define $G \setminus T := ((V \setminus V') \cup \lbrace s \rbrace, E \setminus E')$ as $G$ with the tree branch removed. Then
\[ L_{G \setminus T}(x) = L_G(x). \]
\end{lem}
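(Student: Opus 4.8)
The plan is to turn the orientation characterisation of Theorem \ref{car_sr} into an explicit, lacking-number-preserving bijection between $\s(G)$ and $\s(G\setminus T)$. First I would note that $G\setminus T$ is again connected (the vertices of $V'$ reach the rest of $G$ only through $r$, and since $T$ contains no sink, $r$ must carry at least one edge of $E\setminus E'$), so $G\setminus T\in{\cal G}$ and Theorem \ref{car_sr} applies to it as well. Since $E$ splits as the disjoint union $E'\sqcup(E\setminus E')$, every orientation $O$ of $G$ restricts to an orientation $O|_T$ of $E'$ and an orientation $O'=O|_{G\setminus T}$ of $E\setminus E'$, and conversely any such pair glues back to an orientation of $G$. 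Because $d^T(v)=d^G(v)$ for all $v\in V'$, every edge incident to a vertex of $V'$ lies in $E'$, so the compatibility inequality \eqref{comp_cond} couples the two parts of the graph only at the attachment vertex $r$.

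The crux, which I would establish first, is the rigid structure that compatibility forces on the branch. Fix $\eta\in\s(G)$ together with an orientation $O$ satisfying $\eta\in\comp(O)$, as furnished by Theorem \ref{car_sr}. For $v\in V'$ we have $\In^G_O(v)=\In^T_O(v)$ and $l^G_\eta(v)=d^T(v)-\eta_v\ge 0$. Summing the condition $\In^T_O(v)\ge 1+l^G_\eta(v)$ over $v\in V'$, and using that the total in-degree over all vertices of the tree $T$ equals its number of edges $|V'|$, so that $\sum_{v\in V'}\In^T_O(v)=|V'|-\In^T_O(r)$, gives
\[ |V'|-\In^T_O(r)\ \ge\ |V'|+\sum_{v\in V'} l^G_\eta(v). \]
Since $\In^T_O(r)\ge 0$ and each $l^G_\eta(v)\ge 0$, this is only possible if $\In^T_O(r)=0$ and $l^G_\eta(v)=0$ for every $v\in V'$. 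Thus every SR configuration is \emph{maximal on the branch} ($\eta_v=d^G(v)$ for $v\in V'$), any compatible orientation sends \emph{no} tree edge into $r$, and the branch contributes nothing to $\ell(\eta)$. I expect this counting step to be the main obstacle; everything afterwards is bookkeeping.

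With this in hand I would define $\Phi:\s(G)\to\s(G\setminus T)$ by forgetting the coordinates in $V'$ and lowering the value at $r$ by $d^T(r)$, i.e. $\Phi(\eta)_r=\eta_r-d^T(r)$ and $\Phi(\eta)_v=\eta_v$ otherwise. The shift at $r$ is chosen exactly so that the lacking number is preserved there, $l^{G\setminus T}_{\Phi(\eta)}(r)=d^{G\setminus T}(r)-(\eta_r-d^T(r))=d^G(r)-\eta_r=l^G_\eta(r)$, whence $\ell(\Phi(\eta))=\ell(\eta)$ once the branch is known to contribute nothing. To check $\Phi(\eta)\in\s(G\setminus T)$, take the compatible $O$ above and verify that $O'=O|_{G\setminus T}$ is compatible with $\Phi(\eta)$: away from $r$ nothing changes, while at $r$ one has $\In^{G\setminus T}_{O'}(r)=\In^G_O(r)-\In^T_O(r)=\In^G_O(r)\ge 1+l^G_\eta(r)=1+l^{G\setminus T}_{\Phi(\eta)}(r)$, using $\In^T_O(r)=0$; stability and nonnegativity at $r$ follow from $1+d^T(r)\le\eta_r\le d^G(r)$, the lower bound coming from the forced compatibility at $r$. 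The inverse $\Psi$ restores the branch to its maximal values $\eta_v=d^G(v)$ ($v\in V'$), raises $r$ by $d^T(r)$, and orients $T$ as the out-arborescence rooted at $r$ (so each non-root tree vertex gets in-degree $1$ and $r$ gets tree in-degree $0$); a symmetric compatibility check shows $\Psi$ lands in $\s(G)$ and is two-sided inverse to $\Phi$. As $\Phi$ is a lacking-number-preserving bijection, summing $x^{\ell}$ over each set gives $L_{G\setminus T}(x)=L_G(x)$.
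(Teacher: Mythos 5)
Your proof is correct and follows essentially the same route as the paper: a lacking-number-preserving bijection built from the orientation characterisation of Theorem \ref{car_sr}, with the key counting step (each of the $|V'|$ tree edges must supply the unique incoming edge of a distinct vertex of $V'$, forcing $l^G_\eta(v)=0$ on $V'$ and $\In^T_O(r)=0$) matching the paper's pigeonhole argument, and the out-arborescence rooted at $r$ used for the reverse direction. The only difference is presentational — you map $\s(G)\to\s(G\setminus T)$ with an explicit two-sided inverse, while the paper maps $\s(G\setminus T)\to\s(G)$ and proves injectivity and surjectivity.
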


\begin{figure}[ht]
\psfrag{s}{$s$}
\centerline{\includegraphics[height = 4.5cm]{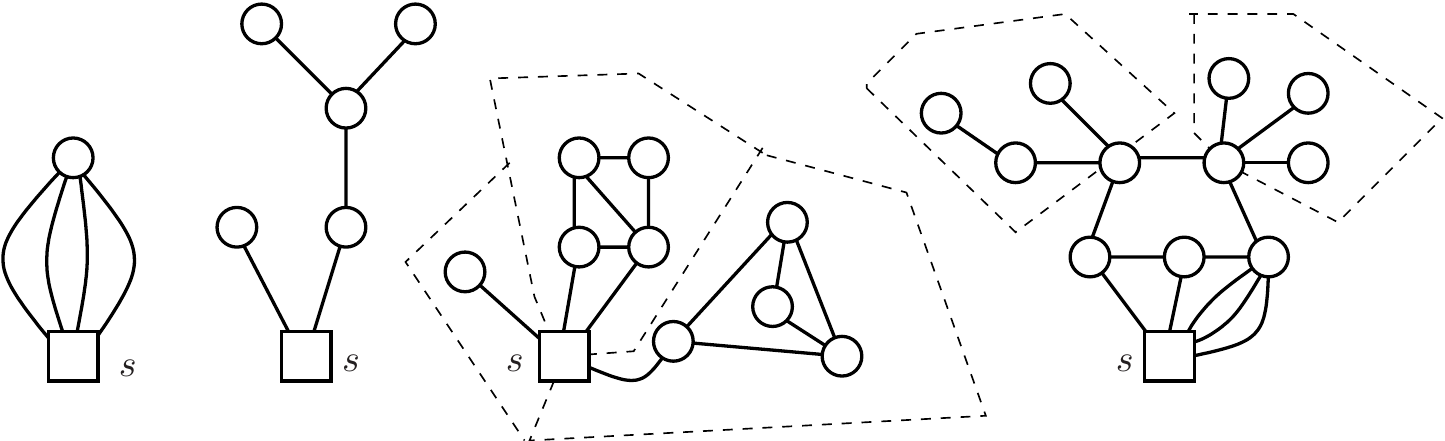}}
\captionn{\label{fig:illus} From Proposition \ref{pro:init_cond}, the first graph has lacking polynomial $1+x+x^2+x^3$, the second graph (which is a tree) 1, and the lacking polynomial of the third graph is the product of the lacking polynomials of the 3 graphs surrounded by dashed lines (all containing a sink at the same place).  Lemma \ref{pro_TB} says that in the fourth figure, the lacking polynomial is not affected by the removal of the two tree branches surrounded by dashed lines.}
\end{figure}

Note that this lemma implies the second statement of Proposition \ref{pro:init_cond}. We now give a formula allowing one to compute the lacking polynomial for a given graph; this formula is similar to the one used to compute the Tutte polynomial. First we require some definitions of edge deletion and contraction, similar to those used in the Tutte polynomial relation (see e.g. Bernardi \cite{OB} and references therein).

\begin{de}
Let $G=(V \cup \lbrace s \rbrace, E) \in {\cal G}$, and consider an edge $e = \{x,y\} \in E$, with $x,y \neq s$.
\begin{enumerate}
\item \textbf{Edge deletion}. The graph $G \setminus e$ is the graph $G$ with $e$ removed, i.e. $G \setminus e = (V \cup \{s\},E \setminus \lbrace e \rbrace)$.
\item \textbf{General edge contraction}. Define the graph $G.e$ as follows:
\begin{itemize}
\item If $e$ is simple, then $G.e$ is $G$ with $e$ contracted, i.e. $G.e = (V \cup \{x.y, s\} \setminus \lbrace x,y \rbrace, E \setminus \lbrace e \rbrace)$, where edges adjacent in $G$ to either $x$ or $y$ are now connected to $x.y$ instead.
\item If $e$ has multiplicity $k \geq 2$, contract one of these edges as above, and replace the other $k-1$ edges with $k-1$ edges $\{x.y,s\}$.
\end{itemize}
\end{enumerate}
\end{de}
\begin{figure}\centering
\psfrag{s}{$s$}\psfrag{a}{$a$}\psfrag{b}{$b$}\psfrag{e}{$e$}\psfrag{a.b}{$a.b$}
\includegraphics[height=4cm]{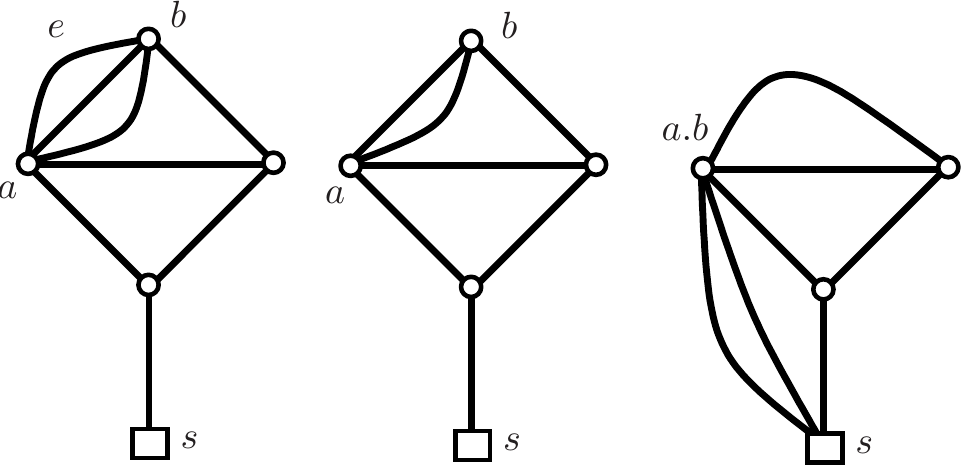}
\captionn{An example of the deletion and contraction operations. $G$ with the edge $e$ marked is on the left; $G \setminus e$ and $G.e$ are in the centre and right respectively.\label{fig:edge_manip}}
\end{figure}

We illustrate these operations in Figure \ref{fig:edge_manip}.

\begin{thm}\label{thm:LP}
Let $G = (V \cup \lbrace s \rbrace,E) \in {\cal G}$, and let $e$ be an edge of $E$ which is neither a bridge (i.e. removing $e$ doesn't disconnect the graph), nor connected to the sink. Then
\beq\label{eq:master-formula}
L_G(x) = x L_{G\setminus e}(x)+L_{G.e}(x).
\eq
\end{thm}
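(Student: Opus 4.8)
The plan is to deduce the recurrence from the orientation characterisation of Theorem~\ref{car_sr}, which identifies $\s(G)$ with $\bigcup_O \comp(O)$, by splitting $\s(G)$ into a \emph{contraction part} and a \emph{deletion part} via weight-preserving bijections. First I would record the elementary per-orientation fact: for a fixed orientation $O$, the stable configurations compatible with $O$ are exactly those with $0 \le l^G_\eta(v) \le \In_O(v)-1$ for every $v \in V$, so $O$ admits a compatible configuration if and only if $\In_O(v)\ge 1$ for all $v\in V$, and such configurations contribute $\prod_{v\in V}(1+x+\cdots+x^{\In_O(v)-1})$ to the relevant generating function. The hypotheses that $e=\{x,y\}$ is neither a bridge nor incident to $s$ are used at the outset: they guarantee $x,y\in V$ and that $G\setminus e$ stays connected (hence in ${\cal G}$), so that both $L_{G\setminus e}$ and $L_{G.e}$ are defined.

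Next I would set up the two lifts into $\s(G)$, dictated by how the lacking number $\ell$ must transform. Contracting $e$ raises the degree of the merged vertex $x.y$ by $2$, so a configuration $\eta''\in\s(G.e)$ should lift to $\eta\in\s(G)$ with $\eta_x+\eta_y=\eta''_{x.y}+2$ and $\eta_v=\eta''_v$ elsewhere; this preserves $\ell$, matching the term $L_{G.e}(x)$. Conversely, deleting $e$ lowers $d^G(x)$ and $d^G(y)$ by one each, so a configuration $\eta'\in\s(G\setminus e)$ should lift with $(\eta_x-\eta'_x)+(\eta_y-\eta'_y)=1$, raising $\ell$ by exactly one and producing the factor $x$ in front of $L_{G\setminus e}(x)$. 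On the level of orientations, a compatible orientation of $G.e$ (resp.\ $G\setminus e$) is turned into one of $G$ by returning each in-edge of $x.y$ to its true endpoint and then orienting $e$ (resp.\ by orienting the restored edge $e$); using the inequalities~\eref{comp_cond} I would check that, for an appropriate choice of the grain split and of the direction of $e$, each lift indeed lands in $\s(G)$.

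The crux, and the step I expect to be the main obstacle, is to prove that these two lifts are injective with images that partition $\s(G)$ exactly. The difficulty is that a single SR configuration is typically compatible with many orientations, so one cannot simply sum the per-orientation product over all $O$ without over-counting; instead I would attach to each $\eta\in\s(G)$ a \emph{canonical} compatible orientation and read off, from the direction it assigns to $e$ and the slack it leaves at $x$ and $y$, whether $\eta$ is of contraction or deletion type, thereby recovering its unique preimage. The example of Figure~\ref{fig:triangle} is instructive here: configurations with $\eta_x+\eta_y$ in the intermediate range (such as $[3,1,2]$ and $[3,2,1]$) can a priori be produced from either side, and the canonical choice must allocate them consistently so that each is hit exactly once. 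The non-bridge hypothesis is precisely what allows the required direction of $e$ always to be realised, since a bridge would force its orientation and destroy this dichotomy.

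Finally I would dispose of the multiple-edge case separately, since there the definition of $G.e$ replaces the $k-1$ surplus parallel copies of $e$ by edges $\{x.y,s\}$; I would check that these sink-edges correctly carry the in-degree that the parallel copies contributed at $x$ and $y$ in $G$, so that the compatibility bookkeeping at $x.y$ reproduces that at $x$ and $y$. Combining the two weight-preserving bijections then yields $L_G(x)=x\,L_{G\setminus e}(x)+L_{G.e}(x)$.
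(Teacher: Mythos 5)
Your overall frame coincides with the paper's: invoke Theorem~\ref{car_sr} to replace the Markov-chain definition of $\s(G)$ by the orientation criterion \eref{comp_cond}, then identify $\s(G)$ with the disjoint union of $\s(G.e)$ (lacking number preserved) and $\s(G\setminus e)$ (lacking number shifted by one) via weight-preserving bijections. Your two lifts and the bookkeeping of $\ell$ are exactly the paper's maps read backwards: the deletion lift is $\eta'\mapsto \eta'+\1_a$ (one grain added at one endpoint of $e=\{a,b\}$), and the contraction lift splits $\eta''_{a.b}+2$ grains between $a$ and $b$. Your per-orientation observation that $\comp(O)$ contributes $\prod_{v}(1+x+\cdots+x^{\In_O(v)-1})$ is correct but, as you yourself note, cannot be summed over orientations, and it plays no role in the paper either.

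The genuine gap sits exactly where you flag ``the main obstacle'': you never define the dichotomy deciding whether a given $\eta\in\s(G)$ is of deletion or contraction type, nor prove that the two lifts are injective with disjoint images covering $\s(G)$. The ``canonical compatible orientation'' you invoke is not constructed, and nothing indicates it would interact well with the lifts. The paper does not use a canonical orientation at all; it uses a quantifier-based, asymmetric split: $\eta\in\s_{(A)}(G)$ iff \emph{some} compatible orientation orients $e$ as $a\to b$ with $l^G_\eta(b)>0$, and $\eta\in\s_{(B)}(G)$ otherwise (a universal condition over all compatible orientations). That formulation is what makes the hard steps provable: surjectivity of the deletion-side map is easy (Lemma~\ref{bij_a}); surjectivity of the contraction-side map (Lemma~\ref{surj_b}) requires an explicit preimage construction with a case analysis on the number $i_a$ of in-edges at $a.b$ coming from $a$'s side, plus a minimality argument (minimising $l^G_{\eta'}(b)$ over preimages) to land in $\s_{(B)}(G)$; and injectivity of the contraction-side map (Lemma~\ref{inj_b}) is the crux: given two distinct preimages in $\s_{(B)}(G)$ with the same image and compatible orientations $O_1,O_2$, one forms the union of directed paths out of $a$ in $O_2$ along edges where $O_1$ and $O_2$ disagree, shows $b$ is not on it, and reverses those edges together with $e$ to produce an orientation compatible with $\eta_2$ that orients $e$ as $a\to b$ --- contradicting the universal property defining $\s_{(B)}(G)$. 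None of these arguments, nor workable substitutes, appear in your proposal, so as written it is a correct setup plus an unproven claim that the decomposition exists.
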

By way of contrast, the corresponding relation for the Tutte polynomial, where $e$ is not a loop or bridge, is
\[T_G(x,y) = T_{G\setminus e}(x,y) + T_{G.e}(x,y),\]
although here loops are allowed in $G$ and so $G.e$ denotes regular edge contraction rather than the version defined above.

One can check that the use of 
Theorem \ref{thm:LP}, Proposition \ref{pro:init_cond}, and Lemma \ref{pro_TB}
allows one to compute $L_G$ for any graph $G$ in ${\cal G}$ without referring to the SSM. More specifically, Theorem \ref{thm:LP} expresses $L_G$ in terms of graphs with one less edge. We can continue to use this theorem, and case 3 in Proposition \ref{pro:init_cond}, until we express $L_G$ in terms of graphs which only contain edges to the sink and bridges. The bridges must then form tree branches which are removed by Lemma \ref{pro_TB}, and case 1 in Proposition \ref{pro:init_cond} provides the lacking polynomials of the remainders. It is not immediately obvious that the polynomial so obtained does not depend on the edges that we choose to delete/contract; however, since the lacking polynomial itself is well-defined, this follows from Theorem \ref{thm:LP}.

A simple corollary of Theorem \ref{thm:LP} gives us the degree of the lacking polynomial.

\begin{cor}
Let $G = (V \cup \{s\},E) \in {\cal G}$. Let the \emph{number of cycles} $c(G)$ be the number of edges it is necessary to remove in order to turn $G$ into a tree, i.e. $c(G) = \vert E \vert - \vert V \vert$. Then the degree of the lacking polynomial $L_G$ is the number of cycles $c(G)$.\label{cor:degree}
\end{cor}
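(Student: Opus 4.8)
The plan is to prove Corollary~\ref{cor:degree} by induction on the number of edges $|E|$, using the master formula of Theorem~\ref{thm:LP} as the main tool, together with the initial conditions of Proposition~\ref{pro:init_cond} and the tree-branch pruning of Lemma~\ref{pro_TB}. The quantity $c(G) = |E| - |V|$ behaves predictably under the reduction operations: deleting an edge gives $c(G \setminus e) = c(G) - 1$ (since $|E|$ drops by one and $|V|$ is unchanged), while the general edge contraction $G.e$ of a simple non-bridge $e$ reduces both $|E|$ and $|V|$ by one, so $c(G.e) = c(G)$. These two identities are exactly what is needed to make the recurrence $L_G(x) = x L_{G \setminus e}(x) + L_{G.e}(x)$ track the degree correctly.

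\textbf{Base cases.} First I would handle the graphs on which the reduction terminates. By the discussion following Theorem~\ref{thm:LP}, the recursion reduces any $G$ to graphs whose only edges are bridges and edges to the sink. For such a graph, Lemma~\ref{pro_TB} removes the tree branches (the bridges) without changing $L_G$, leaving a product (via case~3 of Proposition~\ref{pro:init_cond}) of single-vertex graphs each joined to the sink by some number $k_i$ of parallel edges. By case~1 of Proposition~\ref{pro:init_cond}, each such factor has lacking polynomial $\sum_{i=0}^{k_i-1} x^i$, of degree $k_i - 1$. The degree of the product is the sum of these, namely $\sum_i (k_i - 1)$, which I would verify equals $c(G)$ for these terminal graphs by a direct count of $|E| - |V|$. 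In particular, for a tree the degree is $0 = c(G)$, matching case~2 of Proposition~\ref{pro:init_cond}.

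\textbf{Inductive step.} For the induction, suppose $G$ has a non-bridge edge $e$ not incident to the sink (if no such edge exists, $G$ is a terminal graph already handled above). Applying Theorem~\ref{thm:LP} and the inductive hypothesis gives $\deg(x L_{G \setminus e}) = 1 + \deg L_{G \setminus e} = 1 + c(G \setminus e) = 1 + (c(G) - 1) = c(G)$, and $\deg L_{G.e} = c(G.e) = c(G)$. Thus both summands have degree exactly $c(G)$, and the claim follows provided their leading coefficients do not cancel. Since $L_G$ has nonnegative integer coefficients (being a generating function counting configurations), so do both terms on the right-hand side; hence their leading coefficients are nonnegative, and no cancellation is possible. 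Therefore $\deg L_G = c(G)$.

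\textbf{The main obstacle} I anticipate is the interface between the inductive step and the base case: one must be careful that the edge $e$ chosen in Theorem~\ref{thm:LP} is simple, since the general contraction is defined piecewise, and that the contracted edge in the multiplicity-$k$ case (which converts $k-1$ edges into sink-edges) is correctly accounted for in the count $c(G.e)$. I would check that in every case the identity $c(G.e) = c(G)$ still holds, as the $k-1$ replacement sink-edges preserve the edge count while the single contraction reduces $|V|$ by one. Apart from this bookkeeping, the argument is a clean induction, and the nonnegativity of coefficients is what cleanly rules out the only genuine danger, namely leading-term cancellation.
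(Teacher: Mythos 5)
Your proposal is correct and follows essentially the same route as the paper's proof: induction on $\vert E \vert$ via the recurrence of Theorem~\ref{thm:LP}, using the identities $c(G \setminus e) = c(G) - 1$ and $c(G.e) = c(G)$, with terminal graphs handled by tree-branch pruning (Lemma~\ref{pro_TB}) and cases~1 and~3 of Proposition~\ref{pro:init_cond}. You are in fact slightly more careful than the paper, which leaves implicit both the non-cancellation of leading terms (your nonnegativity argument) and the bookkeeping for contraction of edges of multiplicity $k \geq 2$.
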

\begin{proof}
We use induction on $ \vert E \vert$. If $\vert E \vert = 1$ then $G=(\{u,s\},\{\{u,s\}\})$. Thus $L_G = 1$, $c(G)= 0$ as desired. Now consider a graph $G= (V \cup \{s\},E)$, and an edge $e$ which is not a bridge or connected to the sink. If there is no such edge, then after removing tree branches, which does not affect $c(G)$, we can express $G$ as the product of graphs of the form described in the first case of Proposition \ref{pro:init_cond}, which obviously satisfies the corollary. Otherwise, the graphs $G \setminus e$ and $G.e$ both have $ \vert E \vert - 1$ edges. Moreover, $c(G \setminus e) = c(G) - 1$ and $c(G.e) = c(G)$. The result then immediately follows by induction using \eref{eq:master-formula}.
\end{proof}

\section{Proofs}\label{sec:proof}

\subsection{Proof of Theorem \ref{car_sr}}

Let 
\[\V = \bigcup_O \comp(O)\] be the set of stable configurations on $G$ compatible with some orientation. We wish to show that $\s(G) = \V$. We do this by showing that they are subsets of each other.

\begin{lem} Let  $G = (V \cup \{s\},E) \in {\cal G}$. We have
\[\s(G) \subseteq \V.\]
\end{lem}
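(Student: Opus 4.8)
The goal is to show that every stochastically recurrent configuration $\eta$ is compatible with some orientation of $G$, i.e. $\s(G)\subseteq\V$. My plan is to exploit the dynamical characterisation of $\s(G)$: a configuration is SR if and only if it can be reached from the maximal configuration $\eta^{\max}$ by a finite sequence of grain additions and (stochastic) legal topplings. I would therefore try to \emph{construct} a compatible orientation directly from the toppling history that produces $\eta$, and argue that the compatibility inequality \eref{comp_cond}, namely $\In_O(v)\geq 1+l_\eta(v)$, is maintained as an invariant throughout the dynamics.

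The natural approach is induction on the length of the reaching sequence, carrying along an orientation as a certificate. For the base case, $\eta^{\max}$ satisfies $l_{\eta^{\max}}(v)=0$ for all $v$, so I need an orientation with $\In_O(v)\geq 1$ for every non-sink vertex; since $G$ is connected this is easy to produce (e.g. orient edges away from the sink along a spanning tree, giving each non-sink vertex an incoming tree-edge, and orient remaining edges arbitrarily). For the inductive step I would examine the two moves. Adding a grain at $v$ decreases $l_\eta(v)$ by one and leaves the orientation untouched, so \eref{comp_cond} is preserved (indeed it becomes slacker at $v$). The substantive case is a legal stochastic toppling at a vertex $x$: here $\eta_x$ decreases by the number of grains actually sent out, so $l_\eta(x)$ \emph{increases}, which threatens the inequality at $x$; meanwhile each neighbour $y$ receiving a grain has $l_\eta(y)$ decrease. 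The idea is to update the orientation in tandem with the toppling so that the certificate survives.

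The key mechanism will be \emph{reorientation at the toppled vertex}: when $x$ topples and sends grains out along certain edges, I would flip those very edges (or an appropriate subset) so that they now point \emph{into} $x$, thereby increasing $\In_O(x)$ to compensate for the increase in $l_\eta(x)$. Concretely, since $x$ was legally unstable just before toppling, $\eta_x>d^G(x)$, and after a stochastic toppling it loses at most $d^G(x)$ grains while its lacking number rises correspondingly; by re-pointing an equal number of its incident edges inward I can keep $\In_O(x)\geq 1+l_\eta(x)$. I must simultaneously check that reversing an edge $\{x,y\}$ to point toward $x$ does not break the inequality at the other endpoint $y$: but if that edge carried a grain to $y$, then $l_\eta(y)$ dropped, creating exactly the slack needed to afford losing one incoming edge at $y$. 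This bookkeeping — matching each lost incoming edge at $y$ against a grain gained by $y$ — is what makes the invariant close up, and it is where I expect the real work (and the main obstacle) to lie: one has to handle full versus partial topplings, the possibility that $x$ topples several times, and multiple edges carefully, ensuring the accounting never runs a deficit at any vertex.

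The main difficulty, then, is not any single estimate but verifying that a \emph{single} consistent reorientation rule preserves \eref{comp_cond} at \emph{all} vertices simultaneously after an arbitrary stochastic toppling, rather than just at the toppled vertex. I would isolate this as the crux: establish a local lemma stating that if $\eta$ is compatible with $O$ and $x$ is unstable, then after one legal stochastic toppling at $x$ the resulting configuration is compatible with the orientation obtained by reversing the edges along which grains were sent. Granting that lemma, the inclusion $\s(G)\subseteq\V$ follows immediately by induction on the reaching sequence, using the easy base case and the trivial grain-addition step. An alternative, should the edge-reversal bookkeeping prove stubborn, is to argue non-constructively: start from \emph{any} orientation compatible with the eventual $\eta$ obtained at the end and show such an orientation must exist by a Hall-type / flow argument on the deficits $l_\eta(v)$; but I expect the direct invariant-preservation argument to be cleaner and more in the spirit of the dynamical definition of $\s(G)$.
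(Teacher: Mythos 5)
Your proposal is correct and follows essentially the same route as the paper: both prove the base case by orienting a spanning tree away from the sink so that $\eta^{\max}$ is compatible, then induct along the history of grain additions and legal stochastic topplings from $\eta^{\max}$ to $\eta$, reversing exactly those edges along which grains were sent so that they point into the toppled vertex. Your local bookkeeping --- the legality bound $\eta_x > d^G(x)$ giving $\In_O(x) \geq 1 + l_\eta(x)$ at the toppled vertex, and each lost incoming edge at a neighbour being offset by a received grain --- is precisely the inductive step carried out in the paper.
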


\begin{proof}
Firstly we show that $\eta^{max} \in \V$. For this, consider a spanning tree of $G$ and orient all edges of this tree outwards from the sink; that is, if $\{a,b\}$ is an edge of the spanning tree such that $a$ is closer to the sink than $b$, orient the edge $a \rightarrow b$. Orient all remaining edges in any direction. Then for the resulting orientation every non-sink vertex has at least one incoming edge. Condition \eref{comp_cond} now shows that $\eta^{max}$ is compatible with this orientation, so $\eta^{max} \in \V$.

Now let $O$ be an orientation compatible with $\eta^{max}$, and let $\eta' \in \s(G)$. We shall construct an orientation $O'$ which is compatible with $\eta'$. To do this, consider a history of grain additions and stochastic topplings that leads from $\eta^{max}$ to $\eta'$. We construct $O'$ iteratively from $O$ by making the following changes to the orientation at each step of this history:
\begin{itemize}
\item If a grain topples from a vertex $a$ to a neighbour $b$ and the edge $\{a,b\}$ is oriented $a \rightarrow b$, we reverse the orientation of this edge so that it is oriented $b \rightarrow a$.
\item Otherwise --- that is, if the edge $\{a,b\}$ is oriented $b \rightarrow a$, or if a grain is added --- do nothing.
\end{itemize}
These changes are shown in Figure \ref{fig:history}.
\begin{figure}\centering
\psfrag{e}{$e$}
\includegraphics[height=2.6cm]{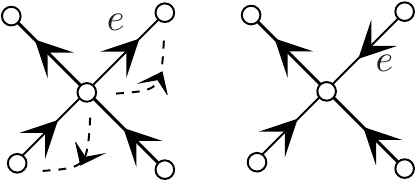}
\captionn{After the vertex topples according to the dashed arrows, only the edge $e$ is reversed.}\label{fig:history}
\end{figure}

We now show that $\eta' \in\comp(O')$. Let $\eta_1=\eta^{max},\eta_2,\ldots,\eta_k=\eta'$ be the configurations constructed at each step of the history, some of which may be unstable. Let $O_1=O,...,O_k=O'$ be the corresponding orientations. We show by induction on $i$ that for any $i \in \lbrace 1, \ldots, k \rbrace$, $\eta_i$ is compatible with $O_i$. For $i=1$ we have $\eta^{max} \in \comp(O)$ by definition. At any fixed $i > 1$, there are two cases:
\begin{enumerate}
\item $\eta_i$ is reached from $\eta_{i-1}$ by addition of a grain at some vertex $a$. 

For any vertex $v$, we have $l_{\eta_i}(v) \leq l_{\eta_{i-1}}(v)$ and by construction $\In_{O_i}(v) = \In_{O_{i-1}}(v)$. Since $\eta_{i-1}$ is compatible with $O_{i-1}$, condition \eref{comp_cond} is satisfied for $\eta_i$ and $O_i$ for all vertices in $V$. Thus $\eta_i$ is compatible with $O_i$.
\item $\eta_i$ is reached from $\eta_{i-1}$ through a (legal) toppling of some vertex $a$.

Let the neighbours of $a$ be $b_1,\ldots,b_m$, and suppose the toppling transfers $\alpha_1,\ldots,\alpha_m$ grains from $a$ to each of these vertices respectively. Since the toppling is legal, $\eta_{i-1} |_a \geq d(a) + 1$, and so after the toppling, $l_{\eta_i}(a) \leq \sum_j \alpha_j - 1$. But now grains have toppled out from $a$ along $\sum_j \alpha_j$ edges, so by construction all these edges are oriented towards $a$. Therefore $\In_{O_i}(a) \geq \sum_j \alpha_j$ and condition \eref{comp_cond} is satisfied at $a$.

It remains to check that \eref{comp_cond} is still satisfied at each $b_j$. At most $\alpha_j$ incoming edges to $b_j$ in $O_{i-1}$ are now outgoing in $O_i$, so $\In_{O_i}(b_j) \geq \In_{O_{i-1}}(b_j) - \alpha_j$. Furthermore, $l_{\eta_i}(b_j) = l_{\eta_{i-1}}(b_j) - \alpha_j$, so by induction \eref{comp_cond} is satisfied at $b_j$.

No other vertices apart from $a$ and its neighbours are changed, so $\eta_i$ is compatible with $O_i$.
\end{enumerate}

Taking $i = k$, we have shown that $O'$ is compatible with $\eta'$. This completes the proof of the lemma.
\end{proof}

\begin{lem} Let  $G = (V \cup \{s\},E) \in {\cal G}$. We have
\[\V \subseteq \s(G).\]
\end{lem}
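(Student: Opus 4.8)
The plan is to prove the reverse inclusion by showing that every $\eta\in\V$ is reachable from $\eta^{max}$ under the SSM dynamics with positive probability. Since the text already notes that $\eta^{max}$ is reachable from any stable configuration, this makes $\eta$ communicate with $\eta^{max}$ and hence places $\eta$ in the recurrent class $\s(G)$. I would fix an orientation $O$ with $\eta\in\comp(O)$ and argue by induction on the total lacking number $\ell(\eta)=\sum_{v}l_\eta(v)$. If $\ell(\eta)=0$ then $\eta=\eta^{max}\in\s(G)$ and there is nothing to prove, so assume $\ell(\eta)\geq 1$.

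For the inductive step I would run the edge-reversal correspondence of the preceding lemma backwards. Recall that there, firing a grain along $a\to b$ reverses the edge to $b\to a$; thus a directed path $s=p_0\to p_1\to\cdots\to p_k=v$ in the witness orientation $O$ records that, in a history producing $\eta$, a grain travelled $v\to p_{k-1}\to\cdots\to p_1\to s$. The idea is therefore: pick a vertex $v$ with $l_\eta(v)\geq 1$ that admits such a directed path from $s$ in $O$, set $\zeta=\eta+\1_v$ (a stable configuration, since $l_\eta(v)\geq 1$), and let $O'$ be $O$ with the edges of this path reversed. Reversing a path changes in-degrees only at its endpoints, so $\In_{O'}(v)=\In_O(v)-1$ while every internal vertex keeps $\In_{O'}=\In_O$; since $l_\zeta(v)=l_\eta(v)-1$ and $\zeta$ agrees with $\eta$ elsewhere, one checks immediately that $\zeta\in\comp(O')$, and $\ell(\zeta)=\ell(\eta)-1$. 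By the induction hypothesis $\zeta\in\s(G)$, and it then remains to exhibit a legal sequence of grain additions and stochastic topplings realising the passage from $\zeta$ to $\eta$: intuitively one pushes the surplus grain at $v$ down the path to the sink, toppling $p_k,p_{k-1},\dots,p_1$ in turn, interleaving the grain additions needed to make each $p_i$ unstable at its turn and arranging that every added grain is ultimately swept into the sink, so that each intermediate vertex returns to its $\eta$-value.

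The main obstacle is the very first choice in the inductive step: showing that when $\ell(\eta)\geq 1$ there always exists a compatible orientation of $G$ containing a directed path from $s$ to some lacking vertex. This is the combinatorial shadow of the fact that deficits can only be created by sending grains into the sink, and it is genuinely delicate because the witness orientation need not orient any edge towards $s$ at all; indeed, when $\ell(\eta)=c(G)$ the compatibility inequalities force $\In_O(s)=0$. I would establish it by a reachability argument: let $D$ be the set of vertices reachable from $s$ in $O$. If $D$ contains a lacking vertex we are done; otherwise every boundary edge of $D$ points into $D$, so the induced subgraph on $V\setminus D$ carries all of the lacking number, and by reversing directed cycles inside $V\setminus D$ (which preserves every in-degree, hence compatibility with $\eta$) together with admissible boundary reversals one can enlarge $D$ until it captures a lacking vertex. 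Making this enlargement terminate, and verifying that the abstract push of $\zeta$ to $\eta$ is realisable as legal stochastic topplings, are the two points that will require the most care.
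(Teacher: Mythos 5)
Your reduction skeleton is sound: the downward induction on $\ell(\eta)$, the verification that $\zeta=\eta+\1_v$ is compatible with the path-reversed orientation $O'$, and the communication argument placing $\eta$ in the recurrent class are all correct. But both load-bearing steps are left open, and the second one is not merely delicate --- as described, it fails. Consider realising the passage from $\zeta$ to $\eta$. To topple a path vertex $p_i$ with $l_\eta(p_i)\geq 1$ you must first load it to $d(p_i)+1$ grains, and a single stochastic toppling sends at most one grain per incident edge; hence if $p_i$ is to finish at $\eta_{p_i}=d(p_i)-l_\eta(p_i)$, some toppling of $p_i$ must ship $l_\eta(p_i)+1$ grains along distinct edges, at most one of which leads to $p_{i-1}$. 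The remaining grains land on off-path neighbours, which cannot destroy grains (in general they are not adjacent to $s$), so cleaning them up requires loading those vertices in turn, creating new surpluses; the procedure does not close. (If instead every toppling of $p_i$ sends a single grain, then $p_i$ terminates at $d(p_i)$, not at $\eta_{p_i}$.) The same problem occurs at $v$ itself, since you chose $l_\eta(v)\geq 1$. The paper's own example (Figure \ref{fig:triangle}) exhibits this: to reach $[2,2,2]$ from $[3,2,2]$ one adds a grain at $v_1$, and any successful stabilisation must make $v_1$ topple at least twice, with grains circulating around the triangle --- nothing like a one-way sweep along the edge $s\to v_1$.

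The first gap is also genuinely open: reversing directed cycles inside $U=V\setminus D$ preserves all in-degrees but creates no edge from $D$ into $U$, so it can never enlarge $D$ by itself; and reversing a boundary edge $u\to d$ (with $u\in U$, $d\in D$) destroys compatibility at $d$ unless $\In_O(d)\geq 2+l_\eta(d)$, slack which need not exist --- indeed in the extreme case $\ell(\eta)=c(G)$ that you yourself mention, every compatibility inequality is tight at every vertex, so no single-edge reversal is ever admissible. Contrast this with the paper's proof, which inducts on $|V|$ rather than on $\ell(\eta)$: it picks a vertex $u$ adjacent to $s$, orients all $\{u,s\}$ edges towards $u$, contracts $u$ into the sink, and applies induction to $\eta^u_v=\eta_v-k_v$, where $k_v$ is the number of edges $v\to u$ in $O$. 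When the resulting history is replayed on $G$, the only vertex ever holding surplus grains is $u$, which is sink-adjacent and can therefore discard them, and a single final toppling of $u$ simultaneously hands each neighbour $v$ exactly its deficit $k_v$ while dumping the excess into the sink. That contraction is precisely the device that dissolves both of your obstacles; without a substitute for it, the proposal does not constitute a proof.
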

\begin{proof}
We use induction on the number of vertices $|V|$. If $|V| = 1$, then $G$ is simply one vertex $v$, connected to $s$ by say $k$ edges. Then $\V$ consists of all configurations $\eta$ where $l_\eta(v) \leq k-1$, which are SR by Proposition \ref{pro:init_cond}.

Suppose now that the lemma is true for any graph with $|V| = k-1$. Take a graph $G=(V \cup \lbrace s \rbrace,E)$ with $\vert V \vert = k$ and a configuration $\eta \in \V$. We wish to show that $\eta$ is SR. To do this, take an orientation $O$ of $G$ such that $\eta \in \comp(O)$, and choose a vertex $u$ connected to $s$. We can assume that all edges connecting $u$ to $s$ are oriented towards $u$ (if some are not, we can reverse them and the resulting orientation remains compatible with $\eta$). Write $m_u$ for the number of such edges. 

Let $G_u$ be the graph obtained from $G$ by identifying $u$ with $s$, removing all $\{u,s\}$ edges (denote the new sink by $s.u$). Note in particular that the degree of any vertex in $G_u$ (except $s.u$) is equal to its degree in $G$. We now let $O_u$ be the orientation on $G_u$ coinciding with $O$ on all edges still present, except that any edge connected to $s.u$ is oriented away from it.

For any vertex $v$ in $G_u$, let  $k_v$ be the number of edges oriented $v\to u$ in $O$ ($k_v\geq 0$). 
We define a configuration $\eta^u$ on $G_u$ by $\eta^u_v = \eta_v - k_v$ for $v\neq s.u$ in $G_u$. Now for any such $v$, we have $\In^{G_u}_{O_u}(v) = \In^G_O(v) + k_v$ and $l^{G_u}_{\eta^u}(v) = l^G_{\eta}(v) + k_v$. Since $\eta \in \comp(O)$, we deduce that $\eta^u \in \comp(O_u)$. Hence by induction $\eta_u \in \s(G_u)$, and therefore there exists a history of grain additions and legal  topplings that leads from $\eta^{max}$ to $\eta^u$ on $G_u$.

We now start from $\eta^{max}$ on $G$ and copy this history (all legal topplings remain legal because the degrees in the two graphs are identical). This results in a configuration $\eta'$ on $G$ with $\eta'_v = \eta^u_v$ if $v \neq u$. We then continue the history by either adding grains to $u$ or repeatedly toppling grains from $u$ to $s$ until $u$ has $d^G(u) + 1$ grains, i.e. it is minimally unstable.

We now make one final toppling at $u$, sending $k_v$ grains to each of its neighbours $v$, and $\max(l^G_{\eta}(u) - \sum_v k_v + 1, 0)$ grains to the sink. In order to do this, we must have at least $l^G_{\eta}(u) - \sum_v k_v + 1$ edges $\{u,s\}$. However, since $\eta$ and $O$ are compatible, we know that $l^G_{\eta}(u) + 1 \leq m_u + \sum_v k_v$, so this is true. Denote by $\eta''$ the configuration we finally reach.

We have:
\begin{itemize}
\item If $v$ is neither $u$ nor one of its neighbours, then $l^G_{\eta''}(v) = l^G_{\eta'}(v) = l^{G_u}_{\eta^u}(v) = l^G_{\eta}(v)$.
\item If $v$ is a neighbour of $u$, then $l^G_{\eta''}(v) = l^G_{\eta'}(v) - k_v = l^{G_u}_{\eta^u}(v) - k_v = l^G_{\eta}(v)$.
\item $l^G_{\eta''}(u) \geq \sum_v k_v + l^G_{\eta}(u) - \sum_v k_v + 1 - 1 = l^G_{\eta}(u)$.
\end{itemize}
Together, this shows that $\eta''$ is identical to $\eta$ except at $u$, where it may have less grains. We then merely add the difference in grains to $u$, and have thus created a history of grain additions and legal topplings which leads from $\eta^{max}$ to $\eta$. Therefore $\eta$ is SR and the lemma, and Theorem \ref{car_sr}, is proved.
\end{proof}

\subsection{Proof of Theorem \ref{th:car_dr}}
Let $\eta \in \Stable(G)$ be compatible with an orientation with no directed cycles. Let $O$ be such an orientation. Since $O$ has no directed cycles, and we can take all
edges adjacent to the sink as oriented away from it, we can order the
vertices of $G$ as $s = v_0,v_1,\ldots,v_n$ so that there exist no edges
$v_j \rightarrow v_i$ for $i < j$.

Now we apply the burning algorithm to $\eta$. We claim by induction that
this can burn the vertices in the order described above. Since $s = v_0$, the
initial condition is trivial. Now suppose we have burned vertices
$v_0,\ldots,v_{i-1}$. All incoming edges to $v_i$ are burnt, so the
number of unburned edges adjacent to $v_i$ is $d(v_i) - \In_O(v_i)$. But by
\eref{comp_cond},
\[\eta_{v_i} = d(v_i) - l_\eta(v_i) \geq d(v_i) - \In_O(v_i) + 1.\]
Therefore $v_i$ can be burnt. Thus the burning algorithm burns all the
vertices of the graph, and $\eta$ is deterministically recurrent.

Conversely, let $\eta \in \D(G)$. Now apply the burning algorithm
to $\eta$, and every time we burn a vertex $v$, orient all edges from
previously burnt vertices to $v$ as incoming edges to $v$. Since all
vertices are burnt, this produces a full orientation $O$ on $G$, which
obviously has no directed cycles. From the burning condition, we know that for all $v$, $\eta_v$ is greater
than the number of unburnt edges, which is $d(v) - \In_O(v)$. This gives
\[l_\eta(v) = d(v) - \eta_v < d(v) - \left( d(v) - \In_O(v) \right) =
\In_O(v).\]
Since these are integers, this means that \eref{comp_cond} is fulfilled for
all vertices. Thus $O$ is compatible with $\eta$ and the theorem is proved. $~\Box$

\subsection{Proof of Lemma \ref{pro_TB}}

We construct a bijection $\Phi$ from $\s(G \setminus T)$ to $\s(G)$ such that for any $\eta \in \s(G \setminus T)$, $l(\eta) = l(\Phi(\eta))$. Define for $\eta \in \s(G \setminus T)$ and $v \in V$
\[ l^G_{\Phi(\eta)}(v) = \left\lbrace \begin{array}{rl}
0 & \mbox{ if } v\in V', \\
l^{G \setminus T}_\eta(v) & \mbox{ otherwise.}
\end{array} \right. \]

Take an orientation $O$ on $G \setminus T$ which is compatible with $\eta$, and extend this orientation to $G$ by orienting each edge in $T$ away from $r$. Then $\Phi(\eta)$ is compatible with the resulting orientation, so $\Phi(\eta) \in \s(G)$. Moreover, $\Phi$ is clearly an injection. 

It remains to show that it is surjective. To see this, consider a configuration $\eta \in \s(G)$ and a compatible orientation $O$. Each vertex in $V'$ must have at least one incoming edge in $E'$. But since $T$ is a tree, $|V'| + 1 = |E'| + 1$. Thus each edge in $E'$ points to a different vertex in $T$, so $\In^G_O(v) = 1$ for all $v \in V'$. This implies that $l^G_\eta(v) = 0$. Furthermore, all edges in $E'$ adjacent to $r$ point away from it.

Now define $\eta'$ on $G \setminus T$ according to $l^{G \setminus T}_{\eta'}(v) = l^G_\eta(v)$, and let $O'$ be the restriction of $O$ to $G \setminus T$. We have $\In^{G \setminus T}_{O'}(v) = \In^G_O(v)$ for all $v \in G \setminus T$, so clearly $\eta'$ is compatible with $O'$ and $\Phi(\eta') = \eta$. Thus $\Phi$ is a bijection.

\subsection{Proof of Theorem \ref{thm:LP}}

Let $G = (V \cup \lbrace s \rbrace,E) \in {\cal G}$, and let $e$ be an edge of $E$ which is neither a bridge nor connected to the sink. Write $e = \{a,b\}$. For $\eta \in \s(G)$ we distinguish the following two cases:
\begin{enumerate}[(A)]
\item There exists an orientation $O$ on $G$, compatible with $\eta$, such that $e$ is oriented $a \rightarrow b$ in $O$, and $l^G_{\eta}(b) > 0$.
\item For all orientations $O$ compatible with $\eta$, all $\{a,b\}$ edges are oriented $b \rightarrow a$, or $l^G_{\eta}(b) = 0$.
\end{enumerate}
We write $\eta \in \s_{(x)}(G)$ if $\eta$ satisfies condition $x \in \lbrace A,B \rbrace$. Obviously $\s(G) = \s_{(A)}(G) \cup \s_{(B)}(G)$.

Now we define a function $f: \s(G) \rightarrow \Stable(G \setminus e) \cup \Stable(G.e)$ as follows:
\begin{itemize}
\item If $\eta \in \s_{(A)}(G)$ then $f(\eta) = \eta - \1_a$  is a configuration on $G \setminus e$.
\item If $\eta \in \s_{(B)}(G)$ then $f(\eta)|_v = \left\lbrace \begin{array}{rl}
\eta_a + \eta_b - 2 & \mbox{ if } v=a.b, \\
\eta_v & \mbox{ otherwise,}
\end{array}
\right. $ is a configuration on $G.e$.
\end{itemize}
To simplify further calculations, we note that in the first case, $l^{G\setminus e}_{f(\eta)}(a) = l^G_\eta(a)$ and $l^{G\setminus e}_{f(\eta)}(b) = l^G_\eta(b) - 1$. In the second case, $l^{G.e}_{f(\eta)}(a.b) = l^G_\eta(a) + l^G_\eta(b)$.

It is easy to see that if $\eta$ is stable, $f(\eta)$ is also stable --- the lacking number can decrease by at most one, and this occurs only at $b$ when $\eta \in \s_{(A)}(G)$, where by definition $l^G_\eta(b) > 0$. Moreover, we have
\[l(f(\eta)) = \left\{ \begin{array}{rl} l(\eta) - 1 & \mbox{ if $\eta \in \s_{(A)}(G)$,} \\ l(\eta) & \mbox{ if $\eta \in \s_{(B)}(G)$.} \end{array} \right.\]
In light of this, it is sufficient to show the following theorem to prove Theorem \ref{thm:LP} since the two sets $\s(G \setminus e)$ and $\s(G.e)$ are disjoint, being configurations on different graphs.

\begin{thm}\label{thm:bij}  Let $G = (V \cup \lbrace s \rbrace,E) \in {\cal G}$.
The function $f$ defined above is a bijection from $\s(G)$ to $\s(G \setminus e) \cup \s(G.e)$.
\end{thm}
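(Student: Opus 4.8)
The plan is to deduce the theorem from two separate bijections, using throughout the orientation characterisation $\s(\cdot)=\bigcup_O\comp(O)$ of Theorem \ref{car_sr}. Since $\s(G)=\s_{(A)}(G)\sqcup\s_{(B)}(G)$ and the two codomains $\s(G\setminus e)$ and $\s(G.e)$ are disjoint (they consist of configurations on different graphs), it suffices to show that $f$ restricts to a bijection $\s_{(A)}(G)\to\s(G\setminus e)$ and to a bijection $\s_{(B)}(G)\to\s(G.e)$.

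The deletion side is routine, and I would dispatch it first via an explicit inverse. For $\zeta\in\s(G\setminus e)$ set $g(\zeta)=\zeta+\1_a$. A short computation gives $l_{g(\zeta)}(a)=l_\zeta(a)$ and $l_{g(\zeta)}(b)=l_\zeta(b)+1>0$, and extending a compatible orientation of $\zeta$ to $G$ by orienting $e\colon a\to b$ shows $g(\zeta)$ is compatible with an orientation in which $e$ points into $b$, i.e. $g(\zeta)\in\s_{(A)}(G)$. Conversely, if $\eta\in\s_{(A)}(G)$ with witnessing orientation $O$ (so $e\colon a\to b$ and $l_\eta(b)>0$), deleting $e$ from $O$ lowers $\In_O(b)$ by exactly one, which is absorbed by the strict inequality $l_\eta(b)>0$; hence $f(\eta)=\eta-\1_a\in\s(G\setminus e)$. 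As $f$ and $g$ are visibly mutually inverse, this half is a bijection.

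The contraction side is the crux. That $f(\s_{(B)}(G))\subseteq\s(G.e)$ follows by contracting a compatible orientation, under which $a.b$ inherits $\In_O(a)+\In_O(b)-1\geq 1+l_\eta(a)+l_\eta(b)$ incoming edges. To invert, fix $\zeta\in\s(G.e)$ and study its fibre: the configurations $\eta$ on $G$ with $\eta_v=\zeta_v$ for $v\neq a,b$ and $l_\eta(a)+l_\eta(b)=l_\zeta(a.b)$, parametrised by $j:=l_\eta(b)$. Lifting a compatible orientation of $\zeta$ to $G$ with $e\colon b\to a$ produces at least one stochastically recurrent member, so the set of recurrent values of $j$ is nonempty; let $j_{\min}$ be its least element (note the corresponding fibre member is unique). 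I would then prove that this member lies in $\s_{(B)}(G)$, while every recurrent fibre member with $j>j_{\min}$ lies in $\s_{(A)}(G)$. Together these give the unique $\s_{(B)}$-representative mapping to $\zeta$, so $\eta\mapsto\zeta$ is reversible and the bijection follows.

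For $j=j_{\min}$: if $j_{\min}=0$ the member is in $\s_{(B)}(G)$ directly since $l_\eta(b)=0$; otherwise, were it in $\s_{(A)}(G)$ it would admit a compatible orientation $O$ with $e\colon a\to b$, and reversing $e$ to $b\to a$ would yield a compatible orientation for the $(j_{\min}-1)$-member (the inequality \eref{comp_cond} is gained at $a$ and only just preserved at $b$), contradicting minimality. For a recurrent member with $j>j_{\min}$ I must, starting from any compatible $O$, reorient $e$ to point $a\to b$ while preserving \eref{comp_cond} everywhere: if $e\colon a\to b$ already, we are done; if a directed cycle through $e$ exists, reverse it (in-degrees are preserved); otherwise seek a directed path in $O$ from $a$, avoiding $e$, to a vertex with slack or to the sink, and reverse that path together with $e$. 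This augmenting-path dichotomy is the main obstacle. If no such path exists, the reachable set $U$ of $a$ in $O$ is sink-free, slack-free and excludes $b$, so every edge incident to $U$ has its head in $U$; summing \eref{comp_cond} over $U$ shows $U$ is \emph{tight}, whence raising $l_\eta(a)$ by one overshoots the edges available to $U$, so no recurrent fibre member has a smaller $j$ — forcing $j=j_{\min}$, against $j>j_{\min}$. Carrying out this max-flow style argument cleanly, and tracking the extra sink-edges created when $e$ is a multiple edge so that the degree bookkeeping behind $l_{f(\eta)}(a.b)=l_\eta(a)+l_\eta(b)$ stays correct, is where the care lies; the remaining steps are routine lacking-number arithmetic.
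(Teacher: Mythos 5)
Your overall scheme---splitting into the deletion half and the contraction half, and on the contraction half parametrising the fibre of $\zeta\in\s(G.e)$ by $j=l_\eta(b)$ and showing the $j_{\min}$ member is the unique $\s_{(B)}$-representative---is sound, and your two key steps (the minimality argument at $j_{\min}$, and the augmenting-path/tight-set dichotomy for $j>j_{\min}$) check out. But there is one genuine gap, and it sits at the foundation of the scheme: the claim that lifting a compatible orientation of $\zeta$ to $G$ with $e$ oriented $b\to a$ produces at least one recurrent fibre member. This is false in general, and without it $j_{\min}$ is not even defined. Concretely, take $V=\{a,b,c\}$ with edges $\{s,a\}$, $e=\{a,b\}$, $\{b,c\}$, $\{c,a\}$ (so $e$ is neither a bridge nor incident to $s$), and let $\zeta$ on $G.e$ have $l_\zeta(a.b)=0$, $l_\zeta(c)=1$. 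The only orientation of $G.e$ compatible with $\zeta$ points both $\{a.b,c\}$ edges into $c$ and the edge $\{s,a.b\}$ into $a.b$; its unique incoming edge at $a.b$ comes from the $a$-side (in the paper's notation, $i_a=1$, $i_b=0$). Lifting it with $e$ oriented $b\to a$ leaves $b$ with in-degree $0$, so no split of $l_\zeta(a.b)$ between $a$ and $b$ gives a compatible pair; in fact every orientation of $G$ compatible with the (unique) fibre member must orient $e$ as $a\to b$, since compatibility at $c$ forces $b\to c$. Your lifting recipe must therefore be replaced by a case analysis on where the incoming edges at $a.b$ come from; this is exactly what the paper does in Lemma~\ref{surj_b}, whose three cases on $i_a$ choose both the orientation of the $\{a,b\}$ edges and the split of the lacking number simultaneously.

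Once that is repaired, the rest stands, and your treatment of injectivity is genuinely different from the paper's. The paper's Lemma~\ref{inj_b} supposes two distinct $\s_{(B)}$-configurations in the same fibre, takes compatible orientations $O_1$, $O_2$, and reverses the directed paths out of $a$ in $O_2$ along edges where the two orientations disagree. You instead argue about a single configuration: either there is a directed cycle through $e$ (reverse it), or a directed path from $a$ to the sink or to a vertex with strict slack in \eref{comp_cond} (reverse it together with $e$), exhibiting membership in $\s_{(A)}(G)$; or else the reachable set $U$ of $a$ is tight, and summing \eref{comp_cond} over $U$ shows that for \emph{any} orientation compatible with a fibre member having larger $l(a)$, the total in-degree over $U$ would exceed the number of edges incident to $U$, ruling out recurrence of any member with smaller $j$ and contradicting $j>j_{\min}$. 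I verified this min-cut style dichotomy; it is a valid and arguably more transparent substitute for Lemma~\ref{inj_b}. Your $j_{\min}$ step and your deletion half coincide with the paper's (the end of Lemma~\ref{surj_b}, and Lemma~\ref{bij_a}, respectively). Finally, do make the multiplicity bookkeeping explicit: with the paper's displayed formula $f(\eta)|_{a.b}=\eta_a+\eta_b-2$, the identity $l^{G.e}_{f(\eta)}(a.b)=l^G_\eta(a)+l^G_\eta(b)$ holds only for $e$ simple; for multiplicity $k\geq 2$ the $k-1$ new sink edges change $d^{G.e}(a.b)$, so the fibre should be defined through lacking numbers (as you do) and the formula for $f$ adjusted accordingly.
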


This theorem is itself a direct consequence of the four following lemmas. The first lemma shows that for any $\eta \in \s(G)$, the configuration $f(\eta)$ is indeed stochastically recurrent.

\begin{lem}\label{well_def} For any $G = (V \cup \lbrace s \rbrace,E)$ in ${\cal G}$,
\[f(\s(G)) \subseteq \s(G \setminus e) \cup \s(G.e).\]
\end{lem}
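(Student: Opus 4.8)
The plan is to invoke the orientation characterisation of Theorem \ref{car_sr}. The configuration $f(\eta)$ is already known to be stable, so it suffices to produce, for each $\eta \in \s(G)$, an orientation of the target graph (namely $G \setminus e$ or $G.e$) that is compatible with $f(\eta)$. I would treat the two cases $\s_{(A)}(G)$ and $\s_{(B)}(G)$ separately, in each case starting from a compatible orientation $O$ of $\eta$ on $G$ (guaranteed by Theorem \ref{car_sr}) carrying the structure imposed by the defining condition of the case, and transporting it to the smaller graph.

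For case A, I would take the orientation $O$ whose existence defines $\s_{(A)}(G)$: it is compatible with $\eta$, has $e$ oriented $a \to b$, and satisfies $l^G_\eta(b) > 0$. Let $O'$ be the restriction of $O$ to $G \setminus e$, i.e.\ simply delete $e$. Removing $e$, which pointed $a \to b$, leaves $\In(a)$ unchanged and lowers $\In(b)$ by exactly one, while the lacking numbers satisfy $l^{G \setminus e}_{f(\eta)}(a) = l^G_\eta(a)$ and $l^{G\setminus e}_{f(\eta)}(b) = l^G_\eta(b) - 1$ (as already recorded), every other vertex being untouched. Hence \eref{comp_cond} is preserved at each vertex, so $f(\eta) \in \comp(O')$ and therefore $f(\eta) \in \s(G \setminus e)$.

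For case B, I would fix any orientation $O$ compatible with $\eta$ and build an orientation $\widetilde{O}$ of $G.e$ as follows: every edge not touching $\{a,b\}$ keeps its direction, every edge joining $a$ or $b$ to a third vertex inherits its direction at $a.b$, and the $k-1$ sink edges created by the contraction are all oriented towards $a.b$. The only vertex whose compatibility is non-trivial is the merged vertex $a.b$, and this is the heart of the argument. Adding the two instances of \eref{comp_cond} for $\eta$ gives $\In^G_O(a) + \In^G_O(b) \geq 2 + l^G_\eta(a) + l^G_\eta(b)$; a count then shows that the $k$ edges internal to $\{a,b\}$ contribute $k-1$ incoming edges at $a.b$ (one is contracted away and the remainder are redirected inward from the sink), so that $\In^{G.e}_{\widetilde{O}}(a.b) = \In^G_O(a) + \In^G_O(b) - 1 \geq 1 + l^G_\eta(a) + l^G_\eta(b)$. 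Recalling the recorded identity $l^{G.e}_{f(\eta)}(a.b) = l^G_\eta(a) + l^G_\eta(b)$ (and noting that in the multiple-edge case the left-hand side is no larger), condition \eref{comp_cond} holds at $a.b$; at every other vertex both $\In$ and $l$ are unchanged, so $f(\eta) \in \comp(\widetilde{O})$ and $f(\eta) \in \s(G.e)$.

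I expect the main obstacle to be the degree and incoming-edge count at the contracted vertex $a.b$, particularly in the multiple-edge case, where one must check that converting the surplus parallel edges into sink edges (rather than loops) gives exactly the intended degree of $a.b$ and that reorienting them inward supplies enough incoming edges to absorb the combined lacking number $l^G_\eta(a) + l^G_\eta(b)$. Once this accounting is pinned down, the verification of \eref{comp_cond} is routine, and the disjunctive structure of the defining condition for case B (all $\{a,b\}$ edges oriented $b \to a$, versus $l^G_\eta(b) = 0$) turns out not to affect the count, since the incoming edges at $a.b$ are supplied by the redirected sink edges regardless of how the $\{a,b\}$ edges were originally oriented.
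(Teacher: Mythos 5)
Your proof is correct and takes essentially the same approach as the paper's: in case A you restrict the compatible orientation having $e$ oriented $a \to b$ to $G \setminus e$, and in case B you transport an arbitrary compatible orientation to $G.e$ with the $k-1$ new sink edges oriented towards $a.b$, then verify \eref{comp_cond} via the identical edge count $\In^{G.e}(a.b) = \In^G_O(a) + \In^G_O(b) - 1$. Your remark that $l^{G.e}_{f(\eta)}(a.b)$ is strictly smaller than $l^G_\eta(a) + l^G_\eta(b)$ when $e$ has multiplicity $k \geq 2$ is a correct and slightly more careful treatment of a point the paper states as an equality.
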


\begin{proof}
Take $\eta \in \s(G)$, and fix an orientation $O$ on $G$ such that $\eta \in \comp(O)$. There are two cases.
\begin{enumerate}
\item $\eta \in \s_{(A)}(G)$.

By construction, we may choose $O$ such that $e$ is oriented $a \rightarrow b$. Let $O'$ be the orientation on $G\setminus e$ which is identical to $O$ on all edges of $G \setminus e$. Then for any vertex $v$,
\[ \In^{G \setminus e}_{O'}(v) = \In^G_O(v) - \delta_{v,b} \geq 1 + l^G_\eta(v) - \delta_{v,b} = 1 + l^{G \setminus e}_{f(\eta)}(v), \]
so $f(\eta) \in \comp(O')$. Thus, by Theorem \ref{car_sr}, $f(\eta) \in \s(G \setminus e)$.
\item $\eta \in \s_{(B)}(G)$.

Let $k$ be the number of $\{a,b\}$ edges in $G$. Then in $G.e$, these will be replaced by $k-1$ edges $\{a.b,s\}$. Orient these as $s \rightarrow a.b$, and orient all other edges of $G.e$ as they are oriented in $O$. Denote by $O'$ the resulting orientation on $G.e$. Then
\[ \In^{G.e}_{O'}(a.b) = \In^G_O(a) + \In^G_O(b) - 1 \geq (l^G_{\eta}(a) + 1) + (l^G_{\eta}(b) + 1) - 1 = l^{G.e}_{f(\eta)}(a.b) + 1, \]
so condition \eref{comp_cond} is satisfied at $a.b$. Since $\In^{G.e}_{O'}(v) = \In^G_O(v)$ for $v \neq a.b$, it is clearly also satisfied elsewhere, so $f(\eta) \in \comp(O')$, and by Theorem \ref{car_sr}, $f(\eta) \in \s(G.e)$.
\end{enumerate}
\end{proof}

We write $f_{(A)}$ (resp. $f_{(B)}$) for the restriction of $f$ to the set $\s_{(A)}(G)$ (resp. $\s_{(B)}(G)$). We will show that each of these are bijections onto their respective images.

\begin{lem}\label{bij_a}For any $G = (V \cup \lbrace s \rbrace,E)$ in ${\cal G}$, 
the function $f_{(A)}$ is a bijection from $\s_{(A)}(G)$ to $\s( G \setminus e)$.
\end{lem}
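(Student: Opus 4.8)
The plan is to exhibit an explicit inverse so that injectivity and surjectivity follow together. Since $f_{(A)}(\eta) = \eta - \1_a$ by definition, $f_{(A)}$ is manifestly injective: from $f_{(A)}(\eta)$ one recovers $\eta = f_{(A)}(\eta) + \1_a$. Moreover, the case~(1) analysis in the proof of Lemma~\ref{well_def} already shows $f_{(A)}(\s_{(A)}(G)) \subseteq \s(G \setminus e)$, so the only real content is surjectivity: for every $\zeta \in \s(G \setminus e)$ I must show that $\eta := \zeta + \1_a$ lies in $\s_{(A)}(G)$ and satisfies $f_{(A)}(\eta) = \zeta$.

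First I would translate everything into lacking numbers and in-degrees, noting that deleting $e = \{a,b\}$ lowers the degrees of $a$ and $b$ each by one, so $d^{G}(a) = d^{G\setminus e}(a)+1$ and $d^{G}(b) = d^{G\setminus e}(b)+1$, while all other degrees are unchanged. For $\eta = \zeta + \1_a$ this yields $l^G_\eta(a) = l^{G\setminus e}_\zeta(a)$, $l^G_\eta(b) = l^{G\setminus e}_\zeta(b)+1$, and $l^G_\eta(v) = l^{G\setminus e}_\zeta(v)$ for every other vertex. In particular $l^G_\eta(b) \geq 1 > 0$, which both certifies stability at $b$ and is exactly what places $\eta$ in case~(A); the remaining lacking numbers are $\geq 0$, so $\eta$ is stable. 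Here $\eta_a = \zeta_a + 1 \geq 1$, so $\eta$ is a genuine configuration and $f_{(A)}(\eta) = \eta - \1_a$ is defined.

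The key step is the orientation argument. Using Theorem~\ref{car_sr}, I would choose an orientation $O'$ of $G\setminus e$ with $\zeta \in \comp(O')$, and extend it to an orientation $O$ of $G$ by orienting the restored edge $e$ as $a \to b$. Then $\In^G_O(a) = \In^{G\setminus e}_{O'}(a)$, $\In^G_O(b) = \In^{G\setminus e}_{O'}(b) + 1$, and all other in-degrees are unchanged. Checking condition \eref{comp_cond} vertex by vertex: at $a$ it becomes $\In^{G\setminus e}_{O'}(a) \geq 1 + l^{G\setminus e}_\zeta(a)$, which is the compatibility of $\zeta$; at $b$ it becomes $\In^{G\setminus e}_{O'}(b) + 1 \geq 1 + (l^{G\setminus e}_\zeta(b)+1)$, again equivalent to the compatibility of $\zeta$; and at every other vertex it is literally the compatibility of $\zeta$. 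Hence $\eta \in \comp(O)$.

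Assembling the conclusion, the orientation $O$ is compatible with $\eta$, orients $e$ as $a \to b$, and satisfies $l^G_\eta(b) > 0$; by Theorem~\ref{car_sr} we get $\eta \in \s(G)$, and by the very definition of case~(A) we get $\eta \in \s_{(A)}(G)$. Since $f_{(A)}(\eta) = \eta - \1_a = \zeta$, surjectivity follows and $f_{(A)}$ is a bijection. The one point demanding care — the closest thing to an obstacle — is the bookkeeping of degrees and in-degrees at $a$ and $b$ under deletion of $e$, and in particular the observation that the single $+1$ appearing in $l^G_\eta(b)$ is precisely what keeps $\eta$ in the (A) class rather than allowing it to fall into case~(B).
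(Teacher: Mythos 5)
Your proof is correct and follows essentially the same route as the paper: injectivity is immediate from the explicit inverse $\zeta \mapsto \zeta + \1_a$, and surjectivity is obtained by extending a compatible orientation of $G \setminus e$ with the edge $e$ oriented $a \to b$, then verifying condition \eref{comp_cond} (the paper compresses your vertex-by-vertex check into a single line using a Kronecker delta). Your more explicit bookkeeping of how deleting $e$ shifts the degrees and lacking numbers at $a$ and $b$ is a faithful expansion of the same argument, including the key observation that $l^G_{\eta}(b) = l^{G\setminus e}_{\zeta}(b) + 1 > 0$ places the preimage in class (A).
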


\begin{proof}
The fact that $f_{(A)}$ is injective follows immediately from the definition of $f$. To show that $f_{(A)}$ is surjective onto $\s( G \setminus e)$, let $\eta \in \s( G \setminus e)$ and take an orientation $O$ on $G \setminus e$ compatible with $\eta$. Let $O' = O \cup \{ a \rightarrow b \}$, and define $\eta' = \eta + \1_a$ as a configuration on $G$. It is obvious that $\eta'$ is stable and $f(\eta') = \eta$. Then for any vertex $v$,
\[\In^G_{O'}(v) = \In^{G \setminus e}_O(v) + \delta_{v,b} \geq 1 + l^{G \setminus e}_O(v) + \delta_{v,b} = 1 + l^G_{\eta'}(v). \]
Thus $\eta' \in \comp(O')$ so $\eta'$ is SR. Moreover, $e$ is oriented $a \rightarrow b$ in $O'$ and $l^G_{\eta'}(b) > 0$, so $\eta' \in \s_{(A)}(G)$ as desired.
\end{proof}

\begin{lem}\label{surj_b} For any $G = (V \cup \lbrace s \rbrace,E)$ in ${\cal G}$, the function $f_{(B)}$ is surjective onto $\s(G.e)$.
\end{lem}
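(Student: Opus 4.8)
The plan is to invert $f$ on $\s_{(B)}(G)$ constructively: given a target $\zeta \in \s(G.e)$, I will build a preimage $\eta \in \s_{(B)}(G)$ with $f(\eta) = \zeta$ by lifting a compatible orientation of $\zeta$ back to $G$. By Theorem \ref{car_sr}, fix an orientation $O$ on $G.e$ with $\zeta \in \comp(O)$. Un-contracting the vertex $a.b$ into $a$ and $b$ partitions the edges incident to $a.b$ into those incident to $a$ in $G$ (the ``$a$-side'') and those incident to $b$ (the ``$b$-side''); reinstating $e$ and keeping every other edge oriented as in $O$ yields a candidate orientation on $G$ once the direction of $e$ is chosen. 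Writing $i_a$ (resp.\ $i_b$) for the number of $a$-side (resp.\ $b$-side) edges oriented into $a.b$, compatibility of $\zeta$ at $a.b$ reads $i_a + i_b = \In^{G.e}_O(a.b) \geq 1 + L$, where $L := l^{G.e}_\zeta(a.b)$.

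The crucial design choice is to select, among all orientations compatible with $\zeta$, one that \emph{maximises} $i_a$. I then orient $e$ as $b \to a$ and split $L$ between $a$ and $b$ by setting $l^G_\eta(a) = \min(L, i_a)$ and $l^G_\eta(b) = \max(0, L - i_a)$, with all other coordinates of $\eta$ equal to those of $\zeta$. This gives $l^G_\eta(a) + l^G_\eta(b) = L$, which is exactly the identity needed for $f(\eta) = \zeta$. For the lifted orientation $O'$ one has $\In^G_{O'}(a) = i_a + 1 \geq 1 + l^G_\eta(a)$, and since $L \leq i_a + i_b - 1$ forces $l^G_\eta(b) \leq i_b - 1$, also $\In^G_{O'}(b) = i_b \geq 1 + l^G_\eta(b)$; hence $\eta \in \comp(O')$, so $\eta \in \s(G)$ by Theorem \ref{car_sr}. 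The degenerate case $i_b = 0$ forces $l^G_\eta(b) = 0$ and is handled by orienting $e$ the other way.

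The delicate point, which I expect to be the main obstacle, is verifying that $\eta$ lands in $\s_{(B)}(G)$ rather than $\s_{(A)}(G)$, since condition (B) quantifies over \emph{all} compatible orientations. If $l^G_\eta(b) = 0$ this is immediate. Otherwise $L > i_a$ and $l^G_\eta(a) = i_a$, and I must show that no orientation compatible with $\eta$ orients $e$ as $a \to b$. Suppose $\tilde O$ were such an orientation. Then $e$ is outgoing from $a$, so the $\In^G_{\tilde O}(a) \geq 1 + i_a$ incoming edges at $a$ are all $a$-side; contracting $\tilde O$ back to $G.e$ yields an orientation still compatible with $\zeta$ (its in-degree at $a.b$ is $\In^G_{\tilde O}(a) + \In^G_{\tilde O}(b) - 1 \geq 1 + L$) but whose number of incoming $a$-side edges is at least $1 + i_a$, contradicting the maximality of $i_a$. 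This contraction-and-maximality argument is what makes the construction land in $\s_{(B)}(G)$, and getting it to mesh with the compatibility bookkeeping is the heart of the proof. The multi-edge case ($e$ of multiplicity $k \geq 2$) should go through identically, the only change being that the $k-1$ extra $\{a.b,s\}$ edges of $G.e$ furnish additional incoming edges at $a.b$ that are simply oriented into $a.b$; they enlarge the budget for $L$ without disturbing the maximality argument.
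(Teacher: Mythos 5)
There is a genuine gap, and it sits exactly where you waved it away: the multi-edge case. Your construction and your maximality argument are sound when $e$ is a simple edge, and there your route genuinely differs from the paper's (you make an extremal choice among orientations of $G.e$ compatible with $\zeta$ and contract a hypothetical violating orientation back to $G.e$, while the paper first builds an arbitrary preimage in $\s(G)$ and then minimises $l^G(b)$ over all such preimages, reversing a single edge and moving one grain to get a contradiction). But for multiplicity $k \geq 2$ the cap $l^G_\eta(a) = \min(L, i_a)$ is simply wrong: condition (B) allows all $k$ parallel copies of $e$ to be oriented $b \to a$, so those copies can also supply in-degree at $a$, and the correct preimage may need $l^G_\eta(a)$ as large as $i_a + k - 1$. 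For the same reason your contradiction argument collapses: an orientation $\tilde O$ compatible with $\eta$ having \emph{one} copy of $e$ oriented $a \to b$ may have $j \geq 1$ other copies oriented $b \to a$, so the incoming edges at $a$ are no longer all $a$-side, and contracting only yields at least $1 + i_a - j$ incoming $a$-side edges --- no contradiction with maximality.

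Concretely, take $V = \{a,b,c\}$ with edges $\{s,c\}$, $\{c,a\}$, $\{c,b\}$ and a double edge $\{a,b\}$ (so $k=2$), and let $\zeta \in \s(G.e)$ have $l^{G.e}_\zeta(a.b) = 2$ and $l^{G.e}_\zeta(c) = 0$. Since $a.b$ has exactly three incident edges in $G.e$, every compatible orientation points all three into $a.b$, so the maximal value of $i_a$ is $1$, and your recipe outputs $l^G_\eta(a) = l^G_\eta(b) = 1$. Orienting both copies of $e$ as $b \to a$ (your prescription taken verbatim) violates \eref{comp_cond} at $b$; orienting one copy each way does give a compatible orientation, but that orientation contains an edge $a \to b$ while $l^G_\eta(b) = 1 > 0$, so $\eta \in \s_{(A)}(G)$, and then $f(\eta)$ is a configuration on $G \setminus e$, not $\zeta$. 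The preimage actually lying in $\s_{(B)}(G)$ has $l^G_\eta(a) = 2$, $l^G_\eta(b) = 0$, which your cap $i_a = 1$ cannot produce. Your strategy can be repaired by raising the cap to $l^G_\eta(a) = \min(L, i_a + k - 1)$ and orienting the parallel copies of $e$ into $a$ as needed; then a hypothetical compatible $\tilde O$ with one copy oriented $a \to b$ has only $j \leq k-1$ copies into $a$, forcing at least $i_a + k - j \geq i_a + 1$ incoming $a$-side edges and restoring your contradiction. The paper's proof sidesteps all of this bookkeeping because its extremal step (reverse one $a \to b$ edge and pass from $\eta'$ to $\eta' + \1_b - \1_a$) is insensitive to how the remaining parallel copies are oriented.
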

\begin{proof}
For any $\eta \in \s(G.e)$, let $O$ be a compatible orientation on $G.e$. Let $k$ be the multiplicity of the edge $e$ in $G$ (as before we may have $k=1$). We may assume that the $k-1$ edges $\{s,a.b\}$ in $G.e$ are all oriented $s \rightarrow a.b$ in $O$. Write $i_a$ (resp. $i_b$) for the number of edges oriented into $a.b$ in $O$ which correspond to edges into $a$ (resp. $b$) in $G$, from vertices other than $b$ (resp. $a$). This is illustrated in Figure \ref{fig:edges}. 
\begin{figure}\centering 
\psfrag{a}{$a$}\psfrag{b}{$b$}\psfrag{k-1}{$k-1$}\psfrag{k}{$k$}\psfrag{i}{$i$}
\psfrag{i_a}{$i_a$}\psfrag{i_b}{$i_b$}
\includegraphics[width=13cm]{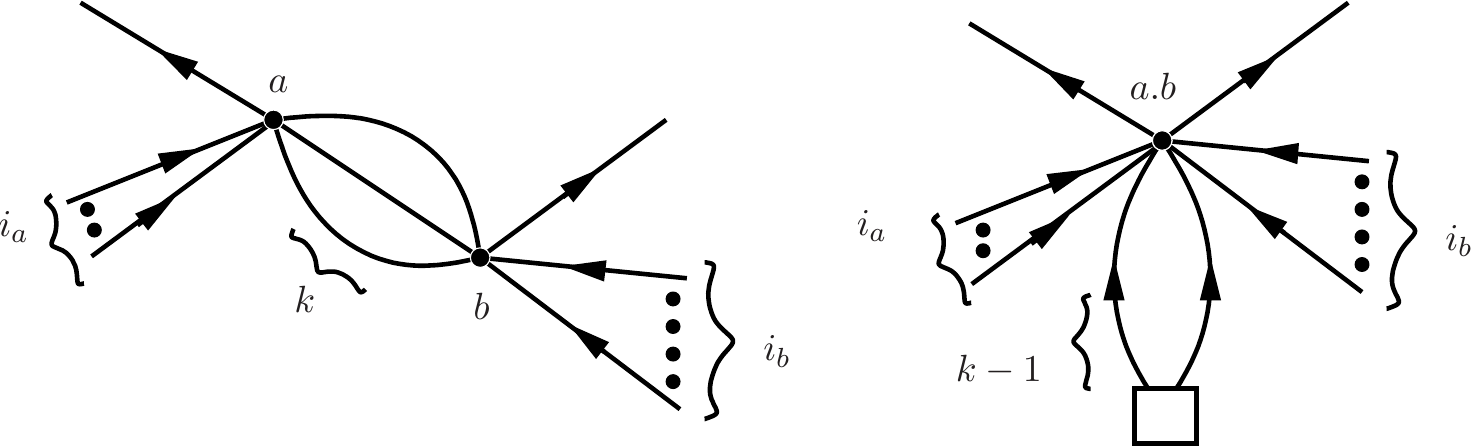}
\captionn{Counting the number of edges in $O'$ (left) and $O$ (right). The $\{a,b\}$ edges are oriented according to the value of $i_a$ and $l_{\eta}^{G.e}(a.b)$.\label{fig:edges}}
\end{figure}

Since $\eta$ is compatible with $O$, we have 
\beq\label{cond_a.b}
 \In^{G.e}_O(a.b) = i_a + i_b + k-1 \geq 1 + l^{G.e}_{\eta}(a.b).
\eq

Let $\bar{f}_{(B)} : \s(G) \rightarrow \Stable(G.e)$ be the extension of $f_{(B)}$ to all of $\s(G)$, i.e.
\[\bar{f}_{(B)}(\eta)|_v = \left\{ \begin{array}{rl}
\eta_a + \eta_b - 2 & \mbox{ if } v=a.b, \\
\eta_v & \mbox{ otherwise.}
\end{array} \right. \]
The difference between $\bar{f}_{(B)}$ and $f_{(B)}$ lies in the fact that $\bar{f}_{(B)}$ is defined over all $\s(G)$, whereas $f_{(B)}$ is defined only on $\s_{(B)}(G)$. We will now define a configuration $\eta' \in \s(G)$ such that $\bar{f}_{(B)}(\eta') = \eta$. Firstly let $\eta'_v = \eta_v$ if $v \neq a,b,a.b$. Likewise let $O'$ be an orientation on $G$ where all edges in $G.e$ are oriented identically to $O$ (the remaining edges are as yet unspecified). Obviously \eref{comp_cond} is satisfied for $\eta'$ and $O'$ at vertices other than $a$ and $b$. We now assign grains to $a$ and $b$, and orientations to the $\{a,b\}$ edges, according to 3 cases.

\begin{enumerate}
\item $i_a=0$.

We set $l^G_{\eta'}(a) = 0$, $l^G_{\eta'}(b) = l^{G.e}_{\eta}(a.b)$. In $O'$, orient one $\{a,b\}$ edge as $b \rightarrow a$ and the remaining as $a \rightarrow b$. Clearly, \eref{comp_cond} is satisfied at $a$ (which has one incoming edge), and
\[ \In^G_{O'}(b) = i_b + k-1 = i_a + i_b + k-1 \geq 1 + l^{G.e}_{\eta}(a.b) = 1 + l^G_{\eta'}(b), \]
from \eref{cond_a.b}, so \eref{comp_cond} is also satisfied at $b$.

\item $1 \leq i_a \leq 1 + l^{G.e}_{\eta}(a.b)$.

We set $l^G_{\eta'}(a) = i_a - 1 \geq 0$, $l^G_{\eta'}(b) = l^{G.e}_{\eta}(a.b) - l^G_{\eta'}(a) \geq 0$. In $O'$, orient all $k$ edges $\{a,b\}$ as $a \rightarrow b$. Then
\[ \In^G_{O'}(a) = i_a = 1 + l^G_{\eta'}(a),\] 
\[ \In^G_{O'}(b) = i_b + k \geq 1 + l^{G.e}_{\eta}(a.b) - i_a + 1 = 1 + l^G_{\eta'}(b),\]
again using \eref{cond_a.b}, so \eref{comp_cond} is satisfied at both $a$ and $b$.

\item $i_a > 1 + l^{G.e}_{\eta}(a.b)$.

We set $l^G_{\eta'}(a) = l^{G.e}_{\eta}(a.b) $, $l^G_{\eta'}(b) = 0$. In $O'$, orient all $k$ edges $\{a,b\}$ as $a \rightarrow b$. Then we have $ \In^G_{O'}(a) = i_a \geq 1 + l^G_{\eta'}(a)$, and $ \In^G_{O'}(b) = i_b + k \geq 1 = 1 + l^G_{\eta'}(b)$. Thus condition \eref{comp_cond} is satisfied at $a$ and $b$.
\end{enumerate}

Now, in each of these cases we have $l^{G.e}_{\eta}(a.b) = l^G_{\eta'}(a) + l^G_{\eta'}(b)$, so $\bar{f}_{(B)}(\eta') = \eta$. Likewise, $\eta'$ is compatible with $O'$, so $\eta' \in \s(G)$. It remains to show that we may choose an $\eta' \in \s_{(B)}(G)$ so that $f(\eta') = \eta$.

To show this, define $l_b^{min} = \min \{l^G_{\eta'}(b): \eta' \in \s(G) \mbox{ s.t. } \bar{f}_{(B)}(\eta') = \eta\}$. Since there exists at least one such $\eta'$, this is well-defined. Now take $\eta' \in \s(G)$ such that $l^G_{\eta'}(b) = l_b^{min}$ and $\bar{f}_{(B)}(\eta') = \eta$. We show that $\eta' \in \s_{(B)}(G)$.

If $l_b^{min} = 0$, this is true by definition. Now suppose that $l_b^{min} > 0$ and there exists an orientation $O'$ on $G$ compatible with $\eta'$ with an edge oriented $a \rightarrow b$. We define the orientation $O''$ as $O'$ with that edge reversed and all other edges oriented as in $O$. Likewise, define the configuration $\eta'' = \eta' + \1_b - \1_a$, so that $l^G_{\eta''}(a) = l^G_{\eta'}(a) + 1$, $l^G_{\eta''}(b) = l^G_{\eta'}(b) - 1 \geq 0$, and $l^G_{\eta''}(v) = l^G_{\eta'}(v)$ elsewhere. Since $\eta' \in \comp(O')$, we have $\eta'' \in \comp(O'')$ by construction. Now we have $\bar{f}_{(B)}(\eta'') = \eta$ and $\eta'' \in \s(G)$, but $l^G_{\eta''}(b) < l^{min}_b$. This is a contradiction of the definition of $l^{min}_b$. Therefore no such orientation $O'$ exists, and $\eta' \in \s_{(B)}(G)$.
\end{proof}

\begin{lem}\label{inj_b}For any $G = (V \cup \lbrace s \rbrace,E)$ in ${\cal G}$, 
the function $f_{(B)}$ is injective.
\end{lem}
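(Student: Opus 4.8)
The plan is to show that $f_{(B)}$ is injective by proving that each fibre of $\bar f_{(B)}$ contains \emph{at most one} element of $\s_{(B)}(G)$. Two configurations $\eta,\eta'\in\s_{(B)}(G)$ with $f_{(B)}(\eta)=f_{(B)}(\eta')$ satisfy $\eta_v=\eta'_v$ for all $v\neq a,b$ and $\eta_a+\eta_b=\eta'_a+\eta'_b$; since such a fibre member is completely determined by the single value $\eta_b$ (equivalently by $l^G_\eta(b)$), it suffices to prove that all elements of $\s_{(B)}(G)$ lying in one fibre share the same value of $\eta_b$. I will in fact establish the stronger statement that an element of $\s_{(B)}(G)$ is precisely the fibre member with the \emph{largest} number of grains at $b$ (smallest lacking number $l(b)$) among all configurations of $\s(G)$ in that fibre; as this maximiser is unique, injectivity follows. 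The case $l^G_\eta(b)=0$ is immediate, since then $\eta_b=d^G(b)$ is the absolute maximum, so throughout I assume $l^G_\eta(b)>0$, in which case membership in $\s_{(B)}(G)$ means, via Theorem \ref{car_sr}, that \emph{every} orientation compatible with $\eta$ has all edges of $e$ oriented $b\to a$.

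Suppose for contradiction that $\eta\in\s_{(B)}(G)$ with $l^G_\eta(b)>0$ and that some $\chi\in\s(G)$ in the same fibre has $\chi_b>\eta_b$. I will produce an orientation compatible with $\eta$ that contains an $a\to b$ edge, contradicting the previous paragraph. I split this into two auxiliary steps. First, an \emph{upward-step} lemma: if $\eta,\chi\in\s(G)$ lie in one fibre with $\chi_b>\eta_b$, then $\eta+\1_b-\1_a\in\s(G)$. Second, a \emph{reversal} lemma: if $\eta\in\s_{(B)}(G)$ with $l^G_\eta(b)>0$ and $\eta+\1_b-\1_a\in\s(G)$, then $\eta$ admits a compatible orientation with an $a\to b$ edge. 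Applying the first with the given $\chi$ and then the second yields the contradiction, hence the theorem.

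Both lemmas are proved by the same device: starting from a compatible orientation of one configuration, I reverse a suitably chosen directed path to obtain a compatible orientation of the target configuration. For the reversal lemma I take an orientation $O_\nu$ compatible with $\nu:=\eta+\1_b-\1_a$; if $O_\nu$ already has an $e$-edge oriented $b\to a$, reversing just that edge gives a compatible orientation of $\eta$ with an $a\to b$ edge (one checks \eref{comp_cond} at $a$ and $b$ directly using $l^G_\nu(a)=l^G_\eta(a)+1$ and $l^G_\nu(b)=l^G_\eta(b)-1$). Otherwise all $e$-edges point $a\to b$, and it then suffices to raise $\In_{O_\nu}(b)$ by one while keeping compatibility: I reverse a directed path from $b$ to a vertex $z$ in the sink-or-slack set $Z=\{a,s\}\cup\{v:\In_{O_\nu}(v)\ge l^G_\nu(v)+2\}$. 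Because all $e$-edges point into $b$, any simple path leaving $b$ avoids $e$ entirely, so the reversal leaves every $e$-edge oriented $a\to b$, and the in-degree bookkeeping shows the result is compatible with $\eta$. The upward-step lemma is entirely analogous, reversing instead a path out of $a$ to a target in $\{s,b\}\cup\{v:\In_O(v)\ge l^G_\eta(v)+2\}$.

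The crux, and the step I expect to be the main obstacle, is proving that the required augmenting path always exists. If it does not, the set $R$ of vertices reachable from $b$ (respectively from $a$) in the current orientation is closed, in the sense that no arc leaves it, and every vertex of $R$ is tight for \eref{comp_cond}; hence the number of edges incident to $R$ equals $|R|+\sum_{v\in R}l^G_\nu(v)$ exactly (with $\nu$ replaced by $\eta$ in the upward-step case). The decisive point is that $R$ then separates $a$ and $b$ in the right way ($a\notin R\ni b$ in the reversal lemma, and $b\notin R\ni a$ in the upward step), so summing the compatibility inequalities of a \emph{second} orientation over $R$ — one compatible with $\eta$ in the reversal lemma, or with $\chi$ in the upward step — demands at least one more incoming edge to $R$ than $R$ can supply, a contradiction. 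This reachable-set and edge-counting argument, which exploits the membership in $\s(G)$ of the auxiliary configuration to rule out the stuck case, is the only delicate part; everything else reduces to verifying \eref{comp_cond} before and after each path reversal.
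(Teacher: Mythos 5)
Your proposal is correct, but it takes a genuinely different route from the paper's proof. The paper pits two elements $\eta_1,\eta_2 \in \s_{(B)}(G)$ of the same fibre directly against each other: it fixes compatible orientations $O_1, O_2$, shows $\In^G_{O_2}(a)$ is tight and strictly smaller than $\In^G_{O_1}(a)$, and then finds the reversible structure inside the \emph{symmetric difference} $\Delta$ of the two orientations (the union $T$ of $\Delta$-directed paths out of $a$ in $O_2$, which provably avoids $b$); reversing $e$ together with $T$ yields a compatible orientation for $\eta_2$ with an $a\to b$ edge, the desired contradiction. You instead prove a stronger statement --- every $\eta \in \s_{(B)}(G)$ is the unique maximiser of $\eta_b$ over \emph{all} of $\s(G)$ in its $\bar{f}_{(B)}$-fibre, not just over $\s_{(B)}(G)$ --- and your engine for producing reversible paths is different: rather than comparing two orientations edge-by-edge, you work with reachability in a single orientation and establish the existence of an augmenting path by a cut-counting argument (if no path from $b$, resp.\ $a$, reaches the sink-or-slack set, the reachable set $R$ is closed and tight, so it is incident to exactly $|R|+\sum_{v\in R} l_\nu(v)$ edges, while summing \eref{comp_cond} for a second orientation over $R$ demands one more incoming edge than $R$ is incident to). This is essentially a flow-feasibility (Hakimi-type) argument, and your bookkeeping of in-degree changes under path reversal, the exclusion of $e$-edges from simple paths out of $b$, and the strict lacking-number discrepancy at $b$ (resp.\ $a$) inside $R$ all check out, including the degenerate cases ($l^G_\eta(b)=0$; $b$ itself slack; $z=a$ as path endpoint). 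What each approach buys: the paper's symmetric-difference trick is shorter and entirely self-contained within the lemma; your version is more modular, the closed-tight-set counting is a reusable tool, and your stronger conclusion meshes nicely with the paper's Lemma \ref{surj_b} (which shows the fibre element minimising $l^G(b)$ lies in $\s_{(B)}(G)$), so that together they characterise $\s_{(B)}(G)$ exactly as the set of fibre-maximisers of $\eta_b$.
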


\begin{proof}
Let $\eta_1, \eta_2 \in \s_{(B)}(G)$ such that $f(\eta_1) = f(\eta_2)$, that is $l^G_{\eta_1}(a) + l^G_{\eta_1}(b) = l^G_{\eta_2}(a) + l^G_{\eta_2}(b)$ and $l^G_{\eta_1}(v) = l^G_{\eta_2}(v)$ if $v\neq a,b$, but suppose $\eta_1 \neq \eta_2$. Assume without loss of generality that $l^G_{\eta_2}(b) > l^G_{\eta_1}(b) \geq 0$. Now choose compatible orientations $O_1$ and $O_2$ respectively on $G$. Suppose that the edge $e$ has multiplicity $k \geq 1$ in $G$. Since $\eta_2 \in \s_{(B)}(G)$ and $l^G_{\eta_2}(b) > 0$, these $k$ edges must be oriented $b \rightarrow a$ in $O_2$.

Now, if $\In^G_{O_2}(a) > 1 + l^G_{\eta_2}(a)$, then reversing the orientation of $e$ to $a \rightarrow b$ in $O_2$ results in another orientation compatible with $\eta_2$, contradicting the fact that $\eta_2 \in \s_{(B)}(G)$. Therefore these quantities are equal and
\beq\label{eq:lem47} \In^G_{O_1}(a) \geq 1 + l^G_{\eta_1}(a) > 1 + l^G_{\eta_2}(a) = \In^G_{O_2}(a). \eq
Now let $\Delta$ be the set of edges in $O_2$ which are oriented differently from $O_1$. We define a subgraph $T = (V(T),E(T))$ of $G$ as the union of all directed paths in $O_2$ starting from $a$ whose edges are in $\Delta$ (and the induced vertices). From (\ref{eq:lem47}), this contains at least one edge adjacent to $a$.

Firstly, we claim that $b \notin V(T)$. Otherwise, there exists a directed path from $a$ to $b$ in $O_2$. Starting from $O_2$, we may reverse $e$ and all edges of this path to reach an orientation with the same number of incoming edges at each vertex as $O_2$, and therefore compatible with $\eta_2$, but with $e$ oriented $a \rightarrow b$. This contradicts the assumption that $\eta_2 \in \s_{(B)}(G)$.

Now start from $O_2$ and reverse the orientation of $e$ and all edges in $E(T)$. Denote this orientation by $O'_2$. We show that $O'_2$ is compatible with $\eta_2$ by checking condition \eref{comp_cond} at $b$ and vertices in $V(T)$ (which include $a$):
\begin{itemize}
\item $\In^G_{O_2'}(b) = \In^G_{O_2}(b) + 1 \geq 1 + l^G_{\eta_2}(b)$, since $e$ has been reversed and $b \notin V(T)$.
\item For $v \in V(T)$, all incoming edges to $v$ in $O_1$ are identically oriented in $O'_2$ by construction. Therefore $\In^G_{O_2'}(v) \geq \In^G_{O_1}(v) \geq 1 + l^G_{\eta_1}(v) \geq 1 + l^G_{\eta_2}(v)$, where the last inequality is strict if $v = a$ and an equality otherwise.
\end{itemize}
This gives us an orientation compatible with $\eta_2$ containing an edge $a \rightarrow b$. Again, this is a contradiction of the assumption that $\eta_2 \in \s_{(B)}(G)$. Thus there cannot exist configurations $\eta_1 \neq \eta_2$ in $\s_{(B)}(G)$ such that $f(\eta_1) = f(\eta_2)$, and $f_{(B)}$ is injective.
\end{proof}

\section{Conclusion}
\label{sec:conclusion}

In this paper, we have devised a generalisation of the ASM in which the topplings are stochastic. This model behaves qualitatively differently to the established ASM of Dhar. In particular, the set of recurrent states of this model contains that of the former model. We have proved a characterisation of these states using graph orientations. We also define a generating function of these states which counts the number of ``lacking" grains, and show that this ``lacking polynomial" satisfies a recurrence relation which resembles that of the Tutte polynomial.

There are two directions in which to advance this work. Given the many combinatorial interpretations of the Tutte polynomial, it would be of interest to see if the lacking polynomial demonstrates similar interpretations. In other words, the lacking polynomial may count certain combinatorial objects for given values of its parameter, and we would like to determine what these objects are. Alternatively, the lacking polynomial may be related in some way to the Tutte polynomial, and the nature of this relation should be determined precisely.

The other topic of interest is to probe further into the behaviour of the Markov chain structure, more specifically the steady state. In the ASM, all recurrent states are equally likely, but this is not the case for the SSM. It would be interesting to calculate the probabilities for the stochastically recurrent states. Once we have done so, we can analyse the behaviour of the model in the steady state, and see if it displays a similar power-law behaviour to that observed for the classic model.
\small
\renewcommand{\baselinestretch}{1}

\bibliographystyle{abbrv}

%\bibliography{SP1}

\end{document}